\numberwithin{equation}{section}
\theoremstyle{plain}
\newtheorem{thm}{Theorem}[section]
\newtheorem{lem}[thm]{Lemma}
\newcommand{\thmref}[1]{Theorem~\ref{#1}}
\newcommand{\lemref}[1]{Lemma~\ref{#1}}
\theoremstyle{definition}
\newtheorem{rmk}[thm]{Remark}
\newcommand{\mc}{\mathcal}
\newcommand{\mbf}{\mathbf}
\newcommand{\mbb}{\mathbb}
\newcommand{\mfk}{\mathfrak}
\newcommand{\md}{\text{ mod }}
\title[Determination of a pair of newforms]{Determination of a pair of newforms from the product of their twisted central values}
\subjclass[2020]{Primary 11F11, 11F66, 11F67 } 
\keywords{Central value, unique determination, simultaneous non-vanishing, twisted average}
\author{Pramath Anamby }
\address{Department of Mathematics\\ 
Indian Institute of Science Education and Research\\ 
Pune -- 411008, India.}
\email{pramath.anamby@gmail.com}
\author{Ritwik Pal }
\address{Stat-Math Unit\\ 
Indian Statistical Institute\\ 
Kolkata -- 700108, India.}
\email{ritwik.1729@gmail.com}
\begin{document}
\maketitle
\begin{abstract}
    We show that a pair of newforms $(f,g)$ can be uniquely determined by the product of the central $L$-values of their twists. To achieve our goal,  we prove an asymptotic formula for the average of the product of the central values of two twisted $L$-functions- $L(1/2, f \times \chi)L(1/2, g \times \chi \psi)$, where $(f,g)$ is a pair of newforms. The average is taken over the primitive Dirichlet characters $\chi$ and $\psi$ of distinct prime moduli.
\end{abstract}
\section{Introduction}

Distinguishing two modular forms using the \textit{arithmetic} and \textit{analytic} data associated with them is an important problem in the theory of modular forms. The classical \textit{multiplicity-one} results use the arithmetic aspects, i.e., the Hecke eigenvalues and the Fourier coefficients to distinguish two Hecke newforms. On the other hand, it was shown in  \cite{luo1997determination} that the analytic data like the central values of twists of the corresponding $L$-functions can also be used for such purposes. This result has its roots in the Waldspurger's formula that relates the squares of the Fourier coefficients of a half integral weight modular form to the central values of the twists of the $L$-function associated with an elliptic newform.  In this direction, several interesting results dealing with quadratic twists (see \cite{munshi2010effective}), Rankin-Selberg convolutions (see \cite{ganguly2009determining}, \cite{munshi2015effective}) are available. This question has also been studied in the context of $GL(3)$-automorphic forms (see \cite{chinta2005determination}, \cite{munshi2015determination}). 

In this article, we address the question of distinguishing a pair of modular forms of arbitrary weight and level from the product of their central values. We do this by investigating the behaviour of a twisted average of central $L$-values over a suitable family of Dirichlet characters. We obtain an asymptotic formula for the twisted average of product of the central values of two $GL(2)$ $L$-functions. A crucial step in this direction is the estimation of a sum involving Hecke eigenvalues and a product of classical Kloosterman sums. In this regard, it is worth mentioning that in \cite{munshi2015determination}, an asymptotic for the twisted average of central values of $GL(3)$ $L$-functions was obtained by estimating a sum of $2$-dimensional Kloosterman sums. Using this asymptotic expression, we can also study the simultaneous non-vanishing of the central values in this family. The study of simultaneous non-vanishing of $L$-values has a rich history (see \cite{michel2002simultaneous}, \cite{ramakrishnan1average}, \cite{akbary2006simultaneous}, \cite{xu2011simultaneous}, \cite{khan2012simultaneous}, \cite{munshi2012note}, \cite{das2015simultaneous}). These non-vanishing results have several important implications in analytic number theory and the theory of automorphic forms (see for example \cite{luo1999generalized}, \cite{iwaniec2000non}, \cite{michel2002simultaneous}).

Let $S_k(N)$ denote the space of cusp forms of weight $k$ and level $N$. We prove the following unique determination result for a pair of Hecke newforms $(f,g)$ of different weights and levels.
\begin{thm}  \label{unipair}
For $i=1,2$, let $f_i\in S_{k_i}(N_i)$ and $g_i\in S_{l_i}(M_i)$ be Hecke newforms such that \begin{equation}
    L(1/2, f_1\times \chi)L(1/2, g_1\times \chi\psi)=L(1/2, f_2\times \chi)L(1/2, g_2\times \chi\psi)
\end{equation} 
for all primitive characters $\chi$ and $\psi$ of prime moduli. Then $k_1=k_2, \ l_1=l_2, \ N_1=N_2, \ M_1 =M_2$ and $f_1=f_2, \ g_1=g_2$.
\end{thm}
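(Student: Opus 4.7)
The plan is to promote the pointwise identity in the hypothesis to an averaged one via the asymptotic formula of this paper, and then peel off the parameters in stages. Concretely, for distinct primes $q$ and $r$, I sum both sides of the assumed identity over primitive characters $\chi \pmod{q}$ and $\psi \pmod{r}$, obtaining
\begin{equation} \label{avgid}
\sum_{\chi, \psi} L(1/2, f_1 \times \chi) L(1/2, g_1 \times \chi\psi) = \sum_{\chi, \psi} L(1/2, f_2 \times \chi) L(1/2, g_2 \times \chi\psi).
\end{equation}
Applying the main asymptotic formula to each side yields, for $i = 1, 2$, an explicit main term $\mathcal{M}(f_i, g_i; q, r)$ together with a power-saving error in the parameters $q, r \to \infty$.

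I would expect $\mathcal{M}(f, g; q, r)$ to factor as $\varphi(q)\varphi(r)$ times an arithmetic constant whose shape is dictated by the archimedean local factors (depending on $k, l$), the local factors at primes dividing $NM$, and the Hecke data of $(f, g)$. Equating $\mathcal{M}(f_1, g_1; q, r) = \mathcal{M}(f_2, g_2; q, r)$ and varying $q, r$ through primes tending to infinity — including primes that divide or avoid the (a priori unknown) levels — should first force the archimedean parts to match, giving $k_1 = k_2$ and $l_1 = l_2$, and then force the ramified local factors to match, giving $N_1 = N_2$ and $M_1 = M_2$.

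With the weights and levels equalized, I would extract the forms themselves by rewriting the hypothesis. For any $\chi$ with $L(1/2, f_2 \times \chi) \ne 0$, which holds for a positive proportion of prime moduli by the simultaneous non-vanishing corollary of the main asymptotic, the identity becomes
\begin{equation}
\frac{L(1/2, f_1 \times \chi)}{L(1/2, f_2 \times \chi)} = \frac{L(1/2, g_2 \times \chi\psi)}{L(1/2, g_1 \times \chi\psi)}.
\end{equation}
The left side is independent of $\psi$, so freezing such a $\chi$ gives $L(1/2, g_1 \times \chi\psi) = c(\chi)\, L(1/2, g_2 \times \chi\psi)$ for every primitive $\psi$ of prime modulus. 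A variant of the classical determination theorem of \cite{luo1997determination}, applied to the family of twists $\{\chi\psi\}$ for $\psi$ varying, then forces $g_1 = g_2$ (and $c(\chi) = 1$); substituting back into the original identity yields $f_1 = f_2$.

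The main obstacle, and the step most sensitive to the precise shape of the main asymptotic, is the extraction of $(k, l, N, M)$ from the equality of main terms. This hinges on the explicit dependence of $\mathcal{M}(f, g; q, r)$ on the archimedean local factors coming from the weight-dependent test function in the approximate functional equation, and on the ramified local factors at primes dividing $NM$; accidental coincidences of these constants for genuinely different quadruples $(k_1, l_1, N_1, M_1)$ and $(k_2, l_2, N_2, M_2)$ must be ruled out, which may require varying $q, r$ in carefully chosen residue classes relative to the levels or exploiting subleading terms of the asymptotic.
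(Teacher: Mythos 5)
Your opening step is already where the argument breaks down. You sum the hypothesized identity over primitive $\chi\pmod q$ and $\psi\pmod r$ \emph{without} any twist, and then hope that the resulting main term $\mathcal{M}(f_i,g_i;q,r)$ carries enough arithmetic information (archimedean factors, ramified local factors) to recover the weights, levels, and forms. It does not. The diagonal contribution to the untwisted average comes from $m=n=1$ in the approximate functional equation, and the test function satisfies $V_h(y)=1+O(y^{1/4})$, so the main term is $\phi(q)\phi(r)\bigl(1+O(q^{-\delta})\bigr)$, which is asymptotically the same for \emph{every} pair $(f,g)$ regardless of weight, level, or Hecke data. Equating these gives $1=1$ and nothing more. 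The whole point of Theorem~\ref{thasymp} is the twist $\overline{\chi}(p)\overline{\psi}(p)\overline{G(\chi)}^2$: the $\overline{\psi}(p)$ shifts the diagonal to $n=p$, so the main term becomes $\frac{\lambda_g(p)}{p^{1/2}}\pi_2(Q_1)q_2$, which actually sees $g$. Once you sum the twisted version of the identity, you get $\lambda_{g_1}(p)=\lambda_{g_2}(p)$ for all admissible primes $p$, and then $g_1=g_2$, $l_1=l_2$, $M_1=M_2$ all follow at once from strong multiplicity one --- no separate step of matching local factors is needed or possible. (Your idea of taking $q$ or $r$ dividing the unknown levels also conflicts with the constraints $(q_1,N_1N_2)=1$, $(q_2,N_2)=1$ under which the asymptotic is proved.)

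The second half of your sketch also has a gap. After reducing to $L(1/2,g_1\times\chi\psi)=c(\chi)L(1/2,g_2\times\chi\psi)$ for fixed $\chi$ and varying $\psi$, you invoke ``a variant of the classical determination theorem of \cite{luo1997determination}'' over the family $\{\chi\psi\}$. Those characters have composite modulus $qr$ with one factor frozen, which is not the family Luo's theorem treats, and the variant is exactly what needs proving. The paper instead proves Lemma~\ref{lemma}: an asymptotic for $\sum_q\sum_\psi^*\overline\psi(p)L(1/2,h\times\psi)$ with \emph{explicit} dependence on the level $q_h$, applied with $h=g\times\chi$ (whose level grows with $q_1$). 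This yields a $\psi$ with $L(1/2,g\times\chi\psi)\neq 0$, hence $L(1/2,f_1\times\chi)=L(1/2,f_2\times\chi)$ for every primitive prime-modulus $\chi$; applying Lemma~\ref{lemma} again to $f_1$ and $f_2$ gives $\lambda_{f_1}(p)=\lambda_{f_2}(p)$ and one finishes with strong multiplicity one. Tracking the level dependence in that lemma is the technical point your sketch elides, and it is essential because the auxiliary form's level is not fixed.
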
 
To prove \thmref{unipair}, we first prove an asymptotic for a twisted average of product of central $L$-values. Before proceeding to state the asymptotic, let us set up the following notations. Let $f,g$ be a pair of Hecke newforms of weights $k_1, k_2$ and levels $N_1, N_2$, respectively. Let $Q_1$ be a positive real parameter and $q_2$ be any prime number such that $(q_2, N_2)=1$ and $q_2\asymp Q_1^c$ (for $c\ge 203/100$). We consider the family $\mc F=\{L(1/2, f\times \chi)L(1/2,g\times\chi\psi), (\chi \md q_1, \psi \md q_2): \chi, \psi \textit{ primitive }, q_1\in \mc D\}$, where $\mc D$ is the set defined as below:  
\begin{equation}
    \mc D:=\{q_1\text{ prime }: Q_1<q_1 <2 Q_1, (q_1, N_1N_2)=1\}.
\end{equation}
For a twisted average of the product of $L$-values $L(1/2, f\times \chi)L(1/2,g\times\chi\psi)$ in the family $\mc F$, we prove the following asymptotic.
\begin{thm}\label{thasymp}
    Let $f$, $g$ be two newforms of weight $k_1,k_2$ and level $N_1,N_2$ respectively. Then for any prime $p< \log Q_1$ with $(p, N_1N_2)=1$, we have
    \begin{equation}\label{sumS}
    \begin{split}
&\sum_{q_1\in \mc D}\sideset{}{^*}\sum_{\chi(q_1)}\sideset{}{^*}\sum_{\psi(q_2)}\overline{\chi}(p)\overline{\psi}(p)\overline{G(\chi)}^2L(1/2, f\times \chi)L(1/2, g\times \chi\psi) \\ &= \frac{\lambda_g(p)}{p^{1/2}} \pi_2(Q_1) q_2 +O(Q_1^{599/200+\epsilon} q_2)+ O(Q_1^{701/200 +\epsilon} q_2^{3/4 + \epsilon}),
\end{split}
\end{equation}
where the constants depend only on $\epsilon$, $f$ and $g$. Here $\sum^*$- denotes that the summation is over the primitive characters and $\pi_r(X)$ denotes the sum over prime powers, defined as below:
\begin{equation}
    \pi_r(X):=\sum\nolimits_{X<p<2X}p^r.
\end{equation}
\end{thm}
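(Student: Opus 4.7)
The strategy is to apply the approximate functional equation (AFE) to each central value, multiply out, and extract the main term using the Gauss sum weight. Write
\[
L(1/2,f\times\chi)=A_f(\chi)+\eta_f\frac{G(\chi)^2}{q_1}B_f(\chi),\qquad L(1/2,g\times\chi\psi)=A_g(\chi,\psi)+\eta_g\frac{G(\chi\psi)^2}{q_1q_2}B_g(\chi,\psi),
\]
with $A_f,B_f$ smoothly truncated Dirichlet sums of length $\asymp q_1\sqrt{N_1}$ and $A_g,B_g$ of length $\asymp q_1q_2\sqrt{N_2}$. Since $(q_1,q_2)=1$ and both are prime, $G(\chi\psi)=\chi(q_2)\psi(q_1)G(\chi)G(\psi)$. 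The product splits into four cross-pieces; the key identity $\overline{G(\chi)}^2 G(\chi)^2=|G(\chi)|^4=q_1^2$ converts dual-root-number factors into clean powers of $q_1$ or leaves a single residual $G(\chi)^2$ to be handled by a classical Kloosterman sum.

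The main term comes from the single cross-piece
\[
\overline{G(\chi)}^2\cdot\frac{\eta_f G(\chi)^2}{q_1}B_f(\chi)A_g(\chi,\psi)=q_1\eta_f\sum_{m,n}\frac{\lambda_f(m)\lambda_g(n)}{\sqrt{mn}}\bar\chi(m)\chi(n)\psi(n)W(m,n).
\]
After attaching $\bar\chi(p)\bar\psi(p)$, orthogonality of primitive characters modulo the primes $q_1,q_2$ yields $\phi(q_1)\phi(q_2)$ subject to the congruences $n\equiv mp\pmod{q_1}$ and $n\equiv p\pmod{q_2}$. By CRT these determine $n$ uniquely modulo $q_1q_2$; since $n\le X_g\asymp q_1q_2$ and $p<\log Q_1$, only the pair $m=1$, $n=p$ lies inside the support of the cutoff. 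Evaluating the smooth weights at their near-unity values yields the contribution $q_1\phi(q_1)\phi(q_2)\lambda_g(p)/\sqrt p$ per $q_1$, which sums to $\lambda_g(p)p^{-1/2}\pi_2(Q_1)q_2$ over $q_1\in\mc D$.

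The error decomposes into (i) the $m\ge 2$ tail of the main cross-piece, handled by Voronoi summation on the $n$-variable against the arithmetic-progression condition, and (ii) the three other cross-pieces $A_fA_g$, $A_fB_g$, $B_fB_g$. In (ii) the character orthogonality produces a classical Kloosterman sum modulo $q_1$ from the surviving $\overline{G(\chi)}^2$ or from a residual $G(\chi)^2$, together with either a Ramanujan sum or a Kloosterman sum modulo $q_2$ arising from a residual $G(\psi)^2$. The final error thus reduces to bounds on sums of the shape
\[
\sum_{q_1\in\mc D}\phi(q_1)\phi(q_2)\sum_{m,n}\frac{\lambda_f(m)\lambda_g(n)}{\sqrt{mn}}\mathrm{Kl}(\alpha,\beta mn;q_1)\mathrm{Kl}(\gamma,\delta n;q_2)W(m,n).
\]

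The main obstacle is to improve on the Weil bound $|\mathrm{Kl}(\cdot;q_1)|\le 2\sqrt{q_1}$ after pairing with the Hecke eigenvalues. The plan is to fix the $n$-variable, apply Voronoi summation on $m$ (which introduces a dual Bessel transform together with new Kloosterman sums), and then use Cauchy--Schwarz in $q_1$ together with a Deshouillers--Iwaniec-type large sieve over the prime moduli $q_1\in\mc D$. Optimising against the AFE lengths $q_1\sqrt{N_1}$ and $q_1q_2\sqrt{N_2}$ produces the two error exponents $Q_1^{599/200+\epsilon}q_2$ (from the dominant-dominant piece) and $Q_1^{701/200+\epsilon}q_2^{3/4+\epsilon}$ (from the dual-dual piece, the $q_2^{3/4}$ coming from the additional $q_2$-Kloosterman sum). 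The hypothesis $q_2\asymp Q_1^c$ with $c\ge 203/100$ is calibrated precisely so that both errors remain below the main term of size $Q_1^3 q_2$.
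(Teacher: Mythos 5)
Your proposal correctly identifies the overall strategy: expand both $L$-values by their approximate functional equations, use the identity $\overline{G(\chi)}^2 G(\chi)^2 = |G(\chi)|^4 = q_1^2$ to decouple the weight $\overline{G(\chi)}^2$ from the dual piece of $L(1/2,f\times\chi)$, and locate the main term in the ($f$-dual)$\times$($g$-non-dual) cross-piece. This matches the paper's decomposition $\mbf S = \mbf S_{nn}+\mbf S_{nd}+\mbf S_{dn}+\mbf S_{dd}$, with the main term coming from $\mbf S_{dn}$ and evaluating to $\lambda_g(p)p^{-1/2}\pi_2(Q_1)q_2$.

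There are, however, two substantive gaps. First, in the main-term evaluation you assert that after imposing $n\equiv mp\pmod{q_1}$ and $n\equiv p\pmod{q_2}$, ``only the pair $m=1$, $n=p$ lies inside the support of the cutoff.'' This is not correct. The $g$-non-dual sum runs to length $\asymp q_1 q_2/Y$, which exceeds $q_1 q_2$ for $Y<1$, and the $f$-dual sum runs to $\asymp q_1 X$; for each admissible $m$ the CRT congruence has at least one solution $n$ in range, and the case $m>q_1$, $n=p$ also contributes. The paper does not dismiss these: it decomposes the $U_1$ range into $A_1\cup A_2\cup A_3$ and, in addition, keeps the $\pm 1$ corrections from $\sideset{}{^*}\sum_{\chi}\chi(n)\overline{\chi}(r)=\phi(q)\delta_{n\equiv r}-1$ (giving the four pieces $U_1,\dots,U_4$), all of which produce genuine power-saving error terms rather than vanishing. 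You also suppress these $-1$ corrections in your orthogonality step.

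Second, your error-term treatment proposes a different mechanism from the paper's and is too sketchy to verify it yields the stated exponents. You suggest Voronoi summation on $m$ followed by Cauchy--Schwarz in $q_1$ and a Deshouillers--Iwaniec-type large sieve over the prime moduli. The paper instead applies Cauchy--Schwarz in $(m,n)$ to $\mbf S_{dd}$ so as to square the $q_1$-sum, uses Poisson summation in $n$ modulo $q_1 q_1' q_2$, splits the resulting product of four Kloosterman sums by CRT into a $q_1 q_1'$-part and a $q_2$-part, and then directly evaluates the complete exponential sum $\sum_{\alpha_2\bmod q_2}e_{q_2}(\alpha_2 n\bar q_1\bar q_1')S(1,\alpha_2 p\bar q_1^2;q_2)S(1,\alpha_2 p\bar{q_1'}^2;q_2)$ as a single Kloosterman sum, saving a factor of $q_2^{1/2}$ over the trivial $q_2^2$ bound; that saving is precisely what produces $q_2^{3/4}$ in the final estimate. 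Your proposal attributes the $q_2^{3/4}$ to ``the additional $q_2$-Kloosterman sum'' but does not explain how a Voronoi-plus-large-sieve argument over prime moduli would realize a comparable saving, nor does it address that Deshouillers--Iwaniec technology is normally calibrated for averaging over all (or smooth) moduli rather than over primes in a dyadic interval. Without these details the claimed exponents $Q_1^{599/200+\epsilon}q_2$ and $Q_1^{701/200+\epsilon}q_2^{3/4+\epsilon}$ are not established by the proposal.
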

Using partial summation and the Prime number theorem it can be easily established that
\[\pi_r(X) \asymp \frac{X^r \pi(X)(2^{r+1}-1)}{r+1}.\]
Thus for a fixed $p$, such that $\lambda_{g}(p) \neq 0$, the main term in \eqref{sumS} is of the magnitude $Q_1^3 q_2 / \log Q_1$. With our choice of the parameters $Q_1$ and $q_2$ (i.e., $q_2 \gg Q_1^{203/100}$), for every small enough $\epsilon$, the main term is always bigger in magnitude than the error terms. Among the error terms, writing $q_2= Q_1^c$ for $c\geq 203/100$, we note that
\begin{equation*}
   Q_1^{599/200+\epsilon} q_2  \begin{cases}
   \ll Q_1^{701/200 +\epsilon} q_2 ^{3/4 + \epsilon}  & \text{ if } 203/100 \leq c \leq 51/25 ;\\
  \gg Q_1^{701/200 +\epsilon} q_2 ^{3/4 + \epsilon}  & \text{ if } c>51/25 .
   \end{cases}
\end{equation*}
Here we want to make a note of the fact that as per the methods of this article $c$ can be taken anything bigger than $101/50$ with a cost of different power-saving error terms than mentioned in \eqref{sumS}. However, for the sake of not complicating the exposition, we take $c \geq 203/100$. We also note that $p$ can be taken to be $\ll Q_1^\delta$ for some small $\delta>0$. But in order to avoid keeping track of an additional parameter during the course of calculation, we take $p\ll \log Q_1$.

To see that \thmref{unipair} follows from \thmref{thasymp}, we observe that by taking $q_2=Q_1^{5/2}+O(Q_1^2)$ and $Q_1 \rightarrow \infty$ in \thmref{thasymp} and invoking the strong multiplicity-one result for the elliptic newforms on $g_1$  and $g_2$, we can conclude $g_1=g_2$. After this, we prove a result about twisted average of central $L$-values over prime moduli (see \lemref{lemma}). Then we first  apply it on $g \times \chi$ to show that for any fixed primitive $\chi \bmod{q_1}$ there exists a primitive $\psi \bmod{q_2}$ such that $ L(1/2, g \times \chi \psi)$ is non-zero. This leads us to apply \lemref{lemma} again on $f_1$ and $ f_2$ separately. Eventually, by invoking the strong multiplicity-one theorem, we conclude $f_1=f_2$.

It is worth mentioning here that \cite[Theorem 5.1]{blomer2018second} has a stronger asymptotic result for a twisted average of central values, where the average is taken over the primitive characters of a single prime moduli. After taking $l=1, l'=p$ in the main term of \cite[Theorem 5.1]{blomer2018second}, we apply the Hecke relation for $\lambda_{f}(np)$ and $\lambda_{g}(np)$ in \cite[(5.2)]{blomer2018second} and write the main term in terms of polynomials in $\lambda_{f}(p)$ and $\lambda_{g}(p)$ (after a suitable truncation of $n$ in terms of $q$). We see that in the asymptotic expression of $\mathcal{Q}(f,g,1/2;1,p)$ in \cite[Theorem 5.1]{blomer2018second}, the error term in terms of $q$ is $q^{-1/144+\epsilon}$ and the main term has $p^{1/2}$ in the denominator. Thus to have the main term dominate the error term one needs to take $p \ll q^{1/72}$. Hence, the polynomials in $\lambda_f(p)$ and $\lambda_{g}(p)$ will be of degree at least $72$ due to the size of the error term in terms of $q$. Thus, to conclude the unique determination of a pair $(f,g)$ from their result boils it down to the following question.

\textbf{Question--} Let $f_1,g_1, f_2, g_2$ be holomorphic non-CM newforms. Let $P_1, P_2, P_3, P_4$  be four one variable polynomials of degree at least $72$, such that $P_1 (\lambda_{f_1}(p)) + P_2 (\lambda_{g_1}(p))= P_3 (\lambda_{f_2}(p))+P_4 (\lambda_{g_2}(p))$ for all but finitely many primes $p$. Then $(f_1,g_1)= (f_2,g_2)$. 
   
The solution to the above question does not seem to be trivial. Even a simpler version of the question after dropping $g_1, g_2$ and $P_2, P_4$ seems rather non-trivial. As unique determination of a pair $(f,g)$ is the main aim of this article, the Theorem 5.1 in \cite{blomer2018second} does not seem to be employable here.

The proof of Theorem \ref{thasymp} begins by expanding the left-hand side of \eqref{sumS} by using the approximate functional equations (AFEs) for $L(1/2, f\times \chi)$ and $L(1/2, g\times \chi\psi)$ (see section \ref{AFEsec}). Naturally, this transforms the problem into estimating an average over the Hecke-eigenvalues with  the character twists. The main term of the asymptotic comes from estimating certain character sums associated simultaneously with the $\lambda_{f}(1)$ and  $\lambda_{g}(p)$, that appear in the  \textit{dual} and the \textit{non-dual} parts of the approximate functional equations of $L(1/2, f\times \chi)$ and $L(1/2, g\times \chi\psi)$, respectively. 

The other parts  of the AFE, contribute to the error terms. By introducing the Gauss sum $\overline{G(\chi)}^2$, we can reduce the problem to dealing with the product of classical Kloosterman sums when estimating the error terms.
Otherwise, it would have resulted in estimating the averages of hyper-Kloosterman sums of degree $3$. A significant component of the error term estimation entails a meticulous analysis of the average of Kloosterman sums derived from the dual components of the AFEs of $L(1/2, f\times \chi)$ and $L(1/2, g\times \chi\psi)$. The additional sum over the character $\psi \md q_2$ plays a vital role in this estimate, without which the asymptotic would \textit{fail by $Q_1^{\epsilon}$}.

Another consequence of the asymptotic in Theorem \ref{thasymp} is  the following simultaneous non-vanishing result for the central values in the family $\mc F$. For any prime number $q_2$ such that $(q_2, N_2)=1$ and $q_2\asymp Q_1^c$ ($c\ge 203/100$), let $\mc M$ denote the set of characters given by
\begin{equation}\label{nonvanset}
\{(\chi \md q_1, \psi \md q_2): \chi, \psi \textit{ primitive }, q_1\in \mc D, L(1/2, f\times \chi)L(1/2, g\times \chi\psi)\neq 0\}.
\end{equation}
\begin{thm}\label{thsimul}
    Let $f$, $g$ be two newforms of weight $k_1,k_2$ and level $N_1,N_2$ respectively. Then we have
\begin{equation}\label{sizeM}
    \#\mc M\gg \frac{Q_1^{2} }{(\log Q_1)^{3}}q_2^{1-1/c-\epsilon},
\end{equation}
as $Q_1\rightarrow\infty$. The implied constants depend only on $f$ and $g$.
\end{thm}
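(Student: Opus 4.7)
I would derive \thmref{thsimul} from \thmref{thasymp} by a standard Cauchy--Schwarz argument on the first moment. Writing $S$ for the left-hand side of \eqref{sumS}, note that pairs $(\chi,\psi)$ with a vanishing $L$-value contribute $0$, so $S$ is supported on $\mc M$. Cauchy--Schwarz then yields
\[|S|^2 \leq \#\mc M \cdot T, \quad T := \sum_{q_1\in\mc D}\sideset{}{^*}\sum_\chi \sideset{}{^*}\sum_\psi |G(\chi)|^4\,|L(1/2,f\times\chi)|^2\,|L(1/2,g\times\chi\psi)|^2.\]
Lower-bounding $|S|$ and upper-bounding $T$ then give the result.

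For the first moment: by Rankin--Selberg ($\sum_{p\leq X}|\lambda_g(p)|^2\sim \pi(X)$), the set of primes with $|\lambda_g(p)|\gg 1$ has positive density, so I would fix a prime $p<\log Q_1$ with $(p,N_1N_2)=1$ and $|\lambda_g(p)|\gg 1$. Since $\pi_2(Q_1)\asymp Q_1^3/\log Q_1$ and both error terms in \eqref{sumS} are dominated by the main term under the assumption $c\geq 203/100$, \thmref{thasymp} delivers $|S|\gg Q_1^3 q_2/\log Q_1$, hence $|S|^2\gg Q_1^6 q_2^2/(\log Q_1)^2$.

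For the second moment: $|G(\chi)|^4=q_1^2\asymp Q_1^2$ pulls out of the inner sum, and I would apply the convexity bound $|L(1/2,f\times\chi)|^2\ll q_1\log^A q_1$ pointwise. The crucial input is that because $q_1$ and $q_2$ are distinct primes and $\chi,\psi$ are primitive, the map $(\chi,\psi)\mapsto \chi\psi$ is a bijection onto the primitive characters modulo $q_1 q_2$. Hence the classical second-moment estimate for Dirichlet twists of a fixed newform at modulus $q_1 q_2$ gives
\[\sideset{}{^*}\sum_\chi\sideset{}{^*}\sum_\psi |L(1/2,g\times\chi\psi)|^2 = \sideset{}{^*}\sum_{\chi'\bmod q_1 q_2}|L(1/2,g\times\chi')|^2 \ll q_1 q_2 \log^B(q_1 q_2).\]
Summing the remaining factors over $q_1\in\mc D$ (with $|\mc D|\asymp Q_1/\log Q_1$) yields $T\ll Q_1^5 q_2 \log^C Q_1$ for some absolute $C$.

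Combining, $\#\mc M \gg |S|^2/T \gg Q_1 q_2/(\log Q_1)^{C+2}$. Using $q_2^{1/c}\asymp Q_1$, this equals (up to constants) $Q_1^2 q_2^{1-1/c}/(\log Q_1)^{C+2}$, and for any fixed $\epsilon>0$ the factor $q_2^\epsilon$ eventually absorbs $(\log Q_1)^{C-1}$, giving the claimed bound $\#\mc M\gg Q_1^2 q_2^{1-1/c-\epsilon}/(\log Q_1)^3$. The main technical obstacle is the second-moment step; the distinctness of $q_1$ and $q_2$ is essential, since without it the identification $\{\chi\psi\}=\{\text{primitive characters mod }q_1 q_2\}$ would fail and a $q_1$ loss would enter the second-moment bound, destroying the argument.
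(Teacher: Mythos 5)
Your proof is correct and follows the same overall Cauchy--Schwarz / first-moment / second-moment architecture as the paper, and your first-moment step (choosing a prime $p$ with $|\lambda_g(p)|\gg 1$ via Rankin--Selberg / PNT for $GL(2)$ $L$-functions, then invoking \thmref{thasymp}) matches the paper's. Where you diverge is the second-moment estimate. You apply the convexity bound to $L(1/2,f\times\chi)$ pointwise, then use the bijection $(\chi\bmod q_1,\psi\bmod q_2)\leftrightarrow\chi'\bmod q_1q_2$ (valid since $(q_1,q_2)=1$) to reduce the $g$-sum to a second moment of $L(1/2,g\times\chi')$ over primitive characters modulo $q_1q_2$, which you cite as ``classical.'' The paper instead opens both approximate functional equations, getting four pieces $\mc S_1,\dots,\mc S_4$, and applies the large sieve inequality twice---once in $\psi$ (length $N\asymp (Q_1q_2)^{1+\epsilon}$), once in $\chi$ (length $M\asymp Q_1^{1+\epsilon}$)---to bound $\sum_{q_1}\frac{q_1}{\phi(q_1)}\sum_\chi^*\sum_\psi^*|LL|^2\ll Q_1^2 q_2^{1+1/c+\epsilon}\log Q_1$. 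The two routes are close in content: the ``classical'' second-moment estimate you invoke is itself proved by the very large-sieve computation the paper writes out inline, and both give essentially the same $T$ (your $T\ll Q_1^5 q_2\log^C Q_1$ versus the paper's effective $T\ll Q_1^4q_2^{1+1/c+\epsilon}\log Q_1\asymp Q_1^5q_2^{1+\epsilon}\log Q_1$), with the $q_2^\epsilon$ absorbing the log-powers in the end exactly as you observe. One small caveat: a uniform second-moment bound over primitive characters at the composite modulus $q_1q_2$ (with $q_1$ ranging over $\mc D$) is not quite an off-the-shelf ``classical'' statement; it does follow from the large sieve uniformly in the modulus, but the safest presentation---and what the paper does---is to carry out that large-sieve argument explicitly rather than cite it. Your pointwise convexity treatment of the $f$-factor is slightly wasteful compared to using the $\chi$-large-sieve, but it cleanly decouples the two variables and costs nothing in the final exponent, so it is a legitimate simplification.
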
 
Let us call the ratio $\mc P=\frac{|\mc M|}{|\mc F|}$ as the proportion of non-vanishing. Then from Theorem \ref{thsimul} we can conclude that $\mc P\gg 1/q_2^{1/c+\epsilon}(\log Q_1)^2$. The estimate \eqref{sizeM} for the size of $\mc M$ is obtained by employing the large sieve inequality for Dirichlet characters and the asymptotic for the twisted average from Theorem \ref{thasymp}. An important point to note here is the choice of $p$ in Theorem \ref{thasymp}. Using the Prime number theorem, we show that it is possible to choose $p$ such that $\lambda_g(p)$ is greater than an absolute constant and with $p\ll_{k_2,N_2} 1$. This allows us to obtain the estimate \eqref{sizeM} for $\mc M$, which is independent of the prime $p$.

\subsection*{Acknowledgements.}
The authors thank NBHM, DAE for the financial support through NBHM postdoctoral fellowship. The first named author thanks IISER, Pune, where he is a postdoctoral fellow. The second named author thanks ISI, Kolkata, where he is a postdoctoral fellow. The authors thank Ritabrata Munshi for suggesting the problem and for various discussions. The authors also thank Soumya Das for comments and discussions.
\section{Preliminaries}
\subsection{Approximate functional equation:}\label{AFEsec} Let $h$ be a newform of weight $k$ and level $N$. Let $\chi$ be a Dirichlet character with conductor $q$ with $(q,N)=1$. Denote by $G(\chi)$, the Gauss sum associated with $\chi$. Then we have (see \cite[Theorem 5.3]{iwaniec2021analytic}
\begin{equation}\label{AFE}
   L(1/2 , h\times \chi)= \sum_{n\ge 1}\frac{\lambda_h(n)\chi(n)}{\sqrt{n}}V_h\left(\frac{nX}{q_{h,\chi}^{1/2}}\right)+\frac{G(\chi)^2}{q}\sum_{n\ge 1}\frac{\overline{\lambda_h(n)\chi(n)}}{\sqrt{n}}V_h\left(\frac{n}{Xq_{h,\chi }^{1/2}}\right),
\end{equation}
where $X>0$, $q_{h,\chi}=Nq^2$ and the smooth function $V_h(y)$ is given by
\begin{equation}
V_h(y)=\frac{1}{2\pi i}\int_{(3)}y^{-s}\frac{\gamma_h(\tfrac{1}{2}+s)}{\gamma_h(\tfrac{1}{2})}\frac{ds}{s}
\end{equation}
and it satisfies the following properties:
\begin{equation} \label{Vasymp}
    V_h(y)= 1+ O(y^{1/4}),
\end{equation}
\begin{equation} \label{Vbound}
    V_h(y) \ll (1+\frac{y}{k^2})^{-A}\quad \text{ for any } A>0.
\end{equation}
For the sake of simplicity, whenever the form $h$ is fixed and $\chi$ is varying, we adopt the notation $q_\chi:=q_{h,\chi}$.

In the rest of the article, the two terms in the right-hand side of \eqref{AFE} are referred to as \textit{non-dual} and $\textit{dual}$ parts of the approximate functional equation, respectively.

\subsection{Some character relations:} Let $r, n, m$ be any positive integer such that $(r,m)=1$ and $(n,m)=1$. Then we have
\begin{equation}\label{charsum}
\sideset{}{^*}{\sum}_{\chi\text{ mod } m}\chi(n)\overline{\chi}(r)=\phi(m)\delta_{n\equiv r\bmod m}-1,
\end{equation}
\begin{equation}\label{Gaussavg}
    \sideset{}{^*}{\sum}_{\chi\text{ mod } m}G(\chi)^2\overline{\chi}(r)=\phi(m)S(1,r;m) -1.
\end{equation}
Let $\chi, \psi$ be primitive characters with conductors $q_1,q_2$ respectively with $(q_1,q_2)=1$. Then we have
\begin{equation}
    G(\chi \psi)= G(\chi) G(\psi) \chi(q_2) \psi(q_1).
\end{equation}
\subsection{The set-up:}
Let $f, g$ be two modular forms of weight $k_1,k_2$ and level $N_1, N_2$ respectively. For a fixed prime $p$, we consider the sum
\begin{equation}
   \mbf{S}= \sum_{q_1\in \mc D}\sideset{}{^*}\sum_{\chi(q_1)}\sideset{}{^*}\sum_{\psi(q_2)}\overline{\chi}(p)\overline{\psi}(p)\overline{G(\chi)}^2L(1/2, f\times \chi)L(1/2, g\times \chi\psi).
\end{equation}
Using the approximate functional equation from \eqref{AFE} for the $L$-values $L(1/2, f\times \chi)$ and $L(1/2, g\times \chi\psi)$, we write
\begin{equation}
    \mbf{S}= \mbf S_{nn}+\mbf S_{nd}+\mbf S_{dn}+\mbf S_{dd},
\end{equation}
where the terms $\mbf S_{**}$ correspond to the dual and non-dual parts of the approximate functional equation and are given by-
\begin{align}
    \mbf S_{nn}&= \sum_{q_1\in \mc D}\sideset{}{^*}\sum_{\chi(q_1)}\sideset{}{^*}\sum_{\psi(q_2)} \sum_{m,n}\overline{G(\chi)}^2 \overline{\chi}(\bar{m}np)\overline{\psi}(np)\frac{\lambda_f(m)\lambda_g(n)}{(mn)^{1/2}}V_f\Big(\tfrac{mX}{q_\chi^{1/2}}\Big)V_g\Big(\tfrac{nY}{q_{\chi\psi}^{1/2}}\Big),\\ 
    \mbf S_{nd}&= \sum_{q_1\in \mc D}\sideset{}{^*}\sum_{\chi(q_1)}\sideset{}{^*}\sum_{\psi(q_2)} \sum_{m,n}\frac{q_1G(\psi)^2}{q_2} \overline{\chi}(\bar{m}np\bar{q_2}^2)\overline{\psi}(np\bar{q_1}^2)\frac{\lambda_f(m)\lambda_g(n)}{(mn)^{1/2}}V_f\left(\tfrac{mX}{q_\chi^{1/2}}\right)V_g\left(\tfrac{n}{Yq_{\chi\psi}^{1/2}}\right),\\
    \mbf S_{dn}&= \sum_{q_1\in \mc D}\sideset{}{^*}\sum_{\chi(q_1)}\sideset{}{^*}\sum_{\psi(q_2)} \sum_{m,n} \frac{|G(\chi)|^4}{q_1} \chi(\bar{m}n\bar{p}) \psi(n\bar{p})\frac{\lambda_f(m)\lambda_g(n)}{(mn)^{1/2}}V_f\Big(\tfrac{m}{Xq_\chi^{1/2}}\Big)V_g\Big(\tfrac{nY}{q_{\chi\psi}^{1/2}}\Big),\\
    \mbf S_{dd}&= \sum_{q_1\in \mc D}\sideset{}{^*}\sum_{\chi(q_1)}\sideset{}{^*}\sum_{\psi(q_2)} \sum_{m,n}\frac{G(\psi)^2G(\chi)^2}{q_2} \overline{\chi}(mnp\bar{q_2}^2)\overline{\psi}(np\bar{q_1}^2)\frac{\lambda_f(m)\lambda_g(n)}{(mn)^{1/2}}V_f\Big(\tfrac{m}{Xq_\chi^{1/2}}\Big)V_g\Big(\tfrac{n}{Yq_{\chi\psi}^{1/2}}\Big).
\end{align}

\section{Main Term}
\subsection{Evaluating \texorpdfstring{$\mbf S_{dn}$}{sdn}:} First, from the approximate functional equations, we consider the \textit{dual} part of $L(1/2, f\times \chi)$ and the \textit{non-dual}  part of $L(1/2, g\times \chi\psi)$. Let us denote its contribution to $\mbf S$ by  $\mbf S_{dn}$. A straight-forward calculation gives us
\begin{equation} \label{Sdn}
\begin{split}
    \mbf S_{dn}&= \sum_{q_1\in \mc D}  \sum_{m,n} q_1  \frac{\lambda_f(m)\lambda_g(n)}{(mn)^{1/2}} \Big(\sideset{}{^*}\sum_{\chi(q_1)}\chi(\bar{m}n\bar{p})\Big) \Big(\sideset{}{^*}\sum_{\psi(q_2)}\psi(n\bar{p})\Big) V_f\Big(\tfrac{m}{Xq_\chi^{1/2}}\Big)V_g\Big(\tfrac{nY}{q_{\chi\psi}^{1/2}}\Big). \\
\end{split}
\end{equation}
By evaluating the character sums using \eqref{charsum} and \eqref{Gaussavg}, we can write
\begin{equation}
    \mbf S_{dn}= U_1 -U_2-U_3+U_4,
\end{equation}
where $U_1,U_2,U_3,U_4$ are given by
\begin{equation} \label{U1}
    U_1 = \sum_{q_1\in \mc D} q_1 \phi(q_1 q_2) \underset{n \equiv mp \bmod{q_1}}{\underset{n\equiv p \bmod{q_2}}{\sum\nolimits_{m,n}}}   \frac{\lambda_f(m)\lambda_g(n)}{(mn)^{1/2}}  V_f\Big(\tfrac{m}{Xq_\chi^{1/2}}\Big)V_g\Big(\tfrac{nY}{q_{\chi\psi}^{1/2}}\Big),
\end{equation}
\begin{equation} \label{U2}
   U_2 = \sum_{q_1\in \mc D} q_1 \phi(q_1) \underset{n \equiv mp \bmod{q_1}}{\underset{n \not\equiv p \bmod{q_2}}{\sum\nolimits_{m,n}}}   \frac{\lambda_f(m)\lambda_g(n)}{(mn)^{1/2}}  V_f\Big(\tfrac{m}{Xq_\chi^{1/2}}\Big)V_g\Big(\tfrac{nY}{q_{\chi\psi}^{1/2}}\Big),
\end{equation}
\begin{equation} \label{U3}
   U_3 = \sum_{q_1\in \mc D} q_1 \phi(q_2) \underset{n \not\equiv mp \bmod{q_1}}{\underset{n \equiv p \bmod{q_2}}{\sum\nolimits_{m,n}}}   \frac{\lambda_f(m)\lambda_g(n)}{(mn)^{1/2}}  V_f\Big(\tfrac{m}{Xq_\chi^{1/2}}\Big)V_g\Big(\tfrac{nY}{q_{\chi\psi}^{1/2}}\Big),
\end{equation}
\begin{equation} \label{U4}
   U_4 = \sum_{q_1\in \mc D} q_1 \underset{n \not\equiv mp \bmod{q_1}}{\underset{n \not\equiv p \bmod{q_2}}{\sum\nolimits_{m,n}}}   \frac{\lambda_f(m)\lambda_g(n)}{(mn)^{1/2}}  V_f\Big(\tfrac{m}{Xq_\chi^{1/2}}\Big)V_g\Big(\tfrac{nY}{q_{\chi\psi}^{1/2}}\Big).
\end{equation}
Using the decay of $V_f$ and $V_g$ from \eqref{Vbound}, the sums over $m$ and $n$ can be truncated at $\ll_{k_1, N_1} q_1^{1+\epsilon}X$ and $\ll_{k_2, N_2}  (q_1q_2)^{1+\epsilon}/Y$ respectively with negligible error in $Q_1$ and $q_2$. Thus for the rest of the calculation we restrict to $m\ll_{k_1, N_1} q_1^{1+\epsilon}X$ and $n\ll_{k_2, N_2}  (q_1q_2)^{1+\epsilon}/Y$.

\textit{Calculation of $U_{1}$}: We note that
\begin{equation*}
  \{(m,n)| n\equiv p \bmod{q_2}, n \equiv mp \bmod{q_1}, m \leq q_1^{1+\epsilon}X, n \leq (q_1q_2)^{1+\epsilon}/Y \} = A_1 \cup A_2 \cup A_3  ,
\end{equation*}
    where
\begin{align}
  A_1 &=\{ (m,n)|m=1,n=p\}, \\
    A_2 &= \{ (m,n)|m >q_1,n=p, m \equiv 1 (\bmod q_1), m \leq q_1^{1+\epsilon}X \}, \nonumber \\
     A_3 &= \{ (m,n)|n >q_2, n\equiv p \bmod{q_2}, n \equiv mp \bmod{q_1}, m \leq q_1^{1+\epsilon}X, n \leq (q_1q_2)^{1+\epsilon}/Y \} \nonumber.
\end{align}
Let the summand of $U_1$ under the three disjoint sets $A_1, A_2, A_3 $ be $U_{1,1}, U_{1,2}, U_{1,3}$ respectively. Using \eqref{Vasymp} and the fact that $q_{\chi} \asymp_{k_1,N_1} q_1^{2}, q_{\chi\psi} \asymp_{k_2,N_2} (q_1q_2)^{2}$,  we calculate
\begin{equation} \label{U11}
\begin{split}
     U_{1,1}&= \frac{\lambda_{g}(p)}{p^{1/2}}\sum_{q_1\in \mc D}q_1\phi(q_1q_2) + O( Q_1^{11/4} q_2 X^{-1/4}) + O(Q_1^{11/4} q_2^{3/4} Y^{1/4}).\\
     &= \frac{\lambda_{g}(p)}{p^{1/2}}\pi_2(Q_1) q_2 +  O( Q_1^{11/4} q_2 X^{-1/4}) + O(Q_1^{11/4} q_2^{3/4} Y^{1/4}).
\end{split}
\end{equation}
For $U_{1,2}$, the number of $m \leq q_1^{1+\epsilon} X$, satisfying $m \equiv 1 \bmod{q_1}$ is $O(q_1^{\epsilon} X)$. Thus using \eqref{Vbound} we calculate 
\begin{align} \label{U12}
&U_{1,2} = \sum_{q_1\in \mc D} q_1 \phi(q_1 q_2) \underset{m \equiv 1 \bmod{q_1}}{\underset{m \leq q_1^{1+\epsilon}X} {\sum_{m >q_1}} } \frac{\lambda_f(m)\lambda_g(p)}{(mp)^{1/2}} V_f\Big(\tfrac{m}{Xq_\chi^{\frac{1}{2}}}\Big)V_g\Big(\tfrac{pY}{q_{\chi\psi}^{\frac{1}{2}}}\Big) \\
 &\ll \sum_{q_1\in \mc D} q_1^2 q_2 \frac{X}{q_1^{1/2-\epsilon}} \ll Q_1^{5/2+\epsilon}q_{2} X. \nonumber
\end{align}
For $U_{1,3}$, for a fixed $q_1$ and $p$, we put $n= p+rq_2$, where $r$ varies over the integers in the range $[ 1, q_1^{1+\epsilon} q_2^{\epsilon}/Y ]$. We plug this in the congruence relation $n \equiv mp \bmod{q_1}$ to get $p+rq_2 \equiv mp \bmod{q_1}$. We observe that for each $m \leq q_1^{1+\epsilon} X$, there is a unique $r$  modulo $q_1$ that satisfies the above congruence relation as $q_2$ is prime. Thus for each $m \leq q_1^{1+\epsilon} X$ the numbers of possible $r$ are at most $(q_1q_2)^{\epsilon}/Y$. Hence, we write
\begin{align} \label{u13}
    U_{1,3} &\ll \sum_{q_1\in \mc D} q_1^2 q_2 \underset{n \equiv p \bmod{q_2}}{\underset{ q_2 < n \leq (q_1 q_2)^{1+\epsilon}/Y, } {\sum_{m \leq q_1^{1+\epsilon}X, n \equiv mp \bmod{q_1}}} } \frac{|\lambda_f(m)\lambda_g(n)|}{(mn)^{1/2}} \nonumber \\ 
    &\ll  \sum_{q_1\in \mc D} q_1^2 q_2 \sum_{m \ll q_1^{1+\epsilon}X} \frac{|\lambda_{f}(m)|}{m^{1/2}} \underset{ p+rq_2 \equiv mp \bmod{q_1}} {\sum_{1 \leq r \leq q_1^{1+\epsilon}q_2^{\epsilon}/Y}}  \frac{|\lambda_g(p+rq_2)|}{(p+rq_2)^{1/2}} \ll Q_1^{\tfrac{7}{2}+\epsilon } q_2^{\tfrac{1}{2} +\epsilon} X^{\tfrac{1}{2}}Y^{-1} .
    \end{align}
In the last summation we have used Deligne's bound for the numerator and use the fact that each term is bounded by  $q_2^{-1/2+\epsilon}$. Then we applied the fact that there are only $(q_1q_2)^{\epsilon}/Y$ possibilities of $r$.

\textit{Estimation of $U_2$}: From \eqref{U2} and \eqref{Vbound} we note that
\begin{align} \label{cal U2}
  U_2 &\ll  \sum_{q_1\in \mc D} q_1^2 \sum_{m \leq q_1^{1+\epsilon} X}\Big(\underset{n \equiv mp \bmod{q_1}}{\sum_{n \leq q_1^{1+\epsilon}X}}\frac{|\lambda_f(m)\lambda_g(n)|}{(mn)^{1/2}}+ \underset{n \equiv mp \bmod{q_1}}{\sum_{q_1^{1+\epsilon} X< n \leq (q_1q_2)^{1+\epsilon}/Y}}\frac{|\lambda_f(m)\lambda_g(n)|}{(mn)^{1/2}}\Big)  \\
   & \ll Q_1^{4+\epsilon} X+ \sum_{n \ll (Q_1 q_2)^{1+\epsilon}/Y}\frac{|\lambda_{g}(n)|}{n^{1/2}} \sum_{m \ll Q_1^{1+\epsilon}X} \frac{|\lambda_{f}(m)|}{m^{1/2}} \underset{q_1 \ll Q_1}{\sum_{q_1| (n-mp)}} q_1^2 \nonumber \\
   &\ll Q_1^{4+\epsilon}X+Q_1^{3+\epsilon}q_2^{1/2+\epsilon}X^{1/2}Y^{-1/2} \nonumber.
\end{align}
\textit{Estimation of $U_3$}: Using \eqref{U3} and \eqref{Vbound} we write that
\begin{align} \label{cal U3}
     &U_3 \ll \sum_{q_1\in \mc D} q_1q_2 \underset{n= p}{\sum_{m \leq q_1^{1+\epsilon} X}}   \frac{|\lambda_f(m)\lambda_g(n)|}{(mn)^{1/2}} + \sum_{q_1\in \mc D} q_1q_2 \underset{n \equiv p \bmod{q_2}}{\underset{q_2< n \leq (q_1q_2)^{1+\epsilon}/Y}{\sum_{m \leq q_1^{1+\epsilon} X}}}   \frac{|\lambda_f(m)\lambda_g(n)|}{(mn)^{1/2}}  \\
     &\ll Q_1^{\frac{5}{2}+\epsilon}q_2 X^{\frac{1}{2}} + \sum_{q_1\in \mc D} q_1 q_2 \sum_{m \ll q_1^{1+\epsilon}X} \frac{|\lambda_{f}(m)|}{m^{1/2}}  \sum_{1 \leq r \leq q_1^{1+\epsilon}q_2^{\epsilon}/Y}  \frac{|\lambda_g(p+rq_2)|}{(p+rq_2)^{1/2}} \nonumber \\
     &\ll Q_1^{5/2+\epsilon}q_2 X^{1/2}+ Q_1^{3+\epsilon}q_2^{1/2+\epsilon}X^{1/2}Y^{-1/2} \nonumber.
\end{align}
\textit{Estimation of $U_4$}: From \eqref{U4} and \eqref{Vbound} we get that
\begin{equation} \label{cal U4}
    U_4 \ll \sum_{q_1\in \mc D} q_1 \underset{n \leq (q_1q_2)^{1+\epsilon}/Y }{\sum_{m \leq q_1^{1+\epsilon} X}}   \frac{|\lambda_f(m)\lambda_g(n)|}{(mn)^{1/2}} \ll Q_{1}^{3+\epsilon}q_2^{1/2+\epsilon}X^{1/2}Y^{-1/2} .
\end{equation}
Thus combining all the above cases \eqref{U11}, \eqref{U12}, \eqref{u13} \eqref{cal U2}, \eqref{cal U3}, \eqref{cal U4} we get that 
\begin{align} \label{finalSdn}
\mbf S_{dn} &= \frac{\lambda_{g}(p)}{p^{1/2}}\pi_2(Q_1) q_2 + O(Q_1^{5/2 +\epsilon} q_2 X + Q_1^{5/2+\epsilon} q_2 X^{1/2} + Q_1^{11/4} q_2 X^{-1/4}) \\ 
& +O( Q_1^{4+\epsilon}X +Q_1^{11/4} q_2^{3/4} Y^{1/4}  +Q_1^{3+\epsilon}q_2^{1/2+\epsilon}X^{1/2}Y^{-1/2} + Q_1^{\tfrac{7}{2}+\epsilon } q_2^{\tfrac{1}{2} +\epsilon} X^{\tfrac{1}{2}}Y^{-1} ). \nonumber
\end{align}

\section{Error Terms}
\subsection{Estimating \texorpdfstring{$\mbf S_{dd}$}{sdd}:}\label{sec:sdd}
Consider the \textit{dual} parts from  the approximate functional equations of $L(1/2, f\times \chi)$ and $L(1/2, g\times \chi\psi)$ and we denote its contribution to $\mbf S$ by  $\mbf S_{dd}$. Then using \eqref{Gaussavg}, we see that
\begin{align}
    \mbf S_{dd}&= \sum_{q_1\in \mc D}\sideset{}{^*}\sum_{\chi(q_1)}\sideset{}{^*}\sum_{\psi(q_2)} \sum_{m,n}\frac{G(\psi)^2G(\chi)^2}{q_2} \overline{\chi}(mnp\bar{q_2}^2)\overline{\psi}(np\bar{q_1}^2)\frac{\lambda_f(m)\lambda_g(n)}{(mn)^{1/2}}V_f\Big(\tfrac{m}{Xq_\chi^{1/2}}\Big)V_g\Big(\tfrac{n}{Yq_{\chi\psi}^{1/2}}\Big)\nonumber\\
    &= \sum_{q_1\in \mc D}\sum_{m,n}\frac{\phi(q_1q_2)}{q_2}\frac{\lambda_f(m)\lambda_g(n)}{(mn)^{1/2}} S(1,np\bar{q_1}^2;q_2)S(1,mnp\bar{q_2}^2;q_1)V_f\Big(\tfrac{m}{Xq_\chi^{1/2}}\Big)V_g\Big(\tfrac{n}{Yq_{\chi\psi}^{1/2}}\Big)\\
    &\quad\quad+O(Q_1^{2+\epsilon}q_2^{1+\epsilon}\sqrt{XY})\nonumber.
\end{align}
Trivially, using the Weil bound for Kloosterman sum, we see that $\mbf S_{dd} \ll Q_1^{7/2+\epsilon}q_2^{1+\epsilon} \sqrt{X} \sqrt{Y}.$

The sums over $m$ and $n$ can be truncated at $\ll_{k_1, N_1} M:= Q_1^{1+\epsilon}X$ and $\ll_{k_2, N_2} N:= (Q_1q_2)^{1+\epsilon}Y$ respectively with negligible error in $Q_1$ and $q_2$ (from \eqref{Vbound}). Using the Mellin inversions for $V_f$ and $V_g$ we can write
\begin{equation}\label{Sddint}
\begin{split}
     \mbf S_{dd}=\frac{1}{(2\pi i)^2}\int_{(\epsilon)}\int_{(\epsilon)}&\sum_{q_1\in \mc D}\sum_{m\ll M}\sum_{n\ll N}\frac{\phi(q_1q_2)}{q_1^{-u/2-v/2}q_2^{1-v/2}}\frac{\lambda_f(m)\lambda_g(n)}{(mn)^{1/2}m^un^v} S(1,np\bar{q_1}^2,q_2)S(1,mnp\bar{q_2}^2,q_1)\\
     &\times \frac{\gamma_f(\tfrac{1}{2}+u)}{\gamma_f(\tfrac{1}{2})}\frac{\gamma_g(\tfrac{1}{2}+v)}{\gamma_g(\tfrac{1}{2})}X^u Y^v\frac{du}{u}\frac{dv}{v}+O(Q_1^{2+\epsilon}q_2^{1+\epsilon}\sqrt{XY}).
\end{split}
\end{equation}
We note that the ratio of gamma functions in \eqref{Sddint} is bounded by $\exp(-(Q_1q_2)^\epsilon)$ when $|\Im(u)|> (Q_1q_2)^\epsilon$ and $|\Im(v)|>(Q_1q_2)^\epsilon$. Thus it is enough to evaluate the integrals in the range $-(Q_1q_2)^\epsilon\ll |\Im(u)|,|\Im(v)| \ll (Q_1q_2)^\epsilon $.

Now consider the sums over $q_1$ and $m,n$ inside the integral and let
\begin{equation}
    \mbf T(u,v)=\sum_{q_1\in \mc D}\sum_{m\ll M}\sum_{n\ll N}\frac{\phi(q_1q_2)}{q_1^{-u/2-v/2}q_2^{1-v/2}}\frac{\lambda_f(m)\lambda_g(n)}{(mn)^{1/2}m^un^v} S(1,np\bar{q_1}^2;q_2)S(1,mnp\bar{q_2}^2;q_1).
\end{equation} 
Taking the absolute values, we get
\begin{equation}
    \mbf T(u,v)\ll q_2^{\epsilon}\sum_{m\ll M}\sum_{n\ll N} \frac{|\lambda_f(m)\lambda_g(n)|}{(mn)^{1/2}}\left|\sum_{q_1} \frac{\phi(q_1)}{q_1^{-u/2-v/2}}S(1,np\bar{q_1}^2;q_2)S(1,mnp\bar{q_2}^2;q_1)\right|.
\end{equation}
Now using the Cauchy-Schwarz inequalities for the $q_2, m$ and $n$ sums, we get
\begin{equation}
    \mbf T(u,v)\ll q_2^{\epsilon}Q_1^{\epsilon} \sqrt{\mc T(u,v)},
\end{equation}
where
\begin{equation}
    \mc T(u,v)=\sum_{n\ll N}\sum_{m\ll M}\left|\sum_{q_1} \frac{\phi(q_1)}{q_1^{-u/2-v/2}}S(1,np\bar{q_1}^2;q_2)S(1,mnp\bar{q_2}^2;q_1)\right|^2.
\end{equation}
Thus we have
\begin{equation}\label{Sdd}
    \mbf S_{dd}\ll q_2^{\epsilon}Q_1^{\epsilon} 
    \int_{- (Q_1q_2)^{\epsilon}}^{ (Q_1q_2)^{\epsilon}}\int_{- (Q_1q_2)^{\epsilon}}^{ (Q_1q_2)^{\epsilon}} \sqrt{\mc T(\epsilon+it,\epsilon+it')} dt dt'+ Q_1^{2+\epsilon}q_2^{1+\epsilon}\sqrt{XY}.
\end{equation}
\subsubsection{Evaluating $\mc T(u,v)$}
First, we introduce a smooth, compactly supported function $W(x)$ with bounded derivatives for the sum over $n$ to get
\begin{equation}
    \mc T(u,v)\le \sum_{m\ll M}\sum_{n\in \mbb Z}\left|\sum_{q_1} \frac{\phi(q_1)}{q_1^{-u/2-v/2}}S(1,np\bar{q_1}^2;q_2)S(1,mnp\bar{q_2}^2;q_1)\right|^2 W(n/N).
\end{equation}
Expanding the square, we find that
\begin{equation}\label{TUV}
    \mc T(u,v)\ll Q_1^{2+\epsilon}\sum_{m\ll M}\sum_{q_1}\sum_{q_1'}|\mc T_{m, q_1,q_2,q_2'}|,
\end{equation}
where
\begin{equation}
    \mc T_{m, q_1,q_1',q_2}:=\sum_{n\in \mbb Z}S(1,np\bar{q_1}^2;q_2)S(1,mnp\bar{q_2}^2;q_1)S(1,np\bar{q_1'}^2;q_2)S(1,mnp\bar{q_2}^2;q_1')W(n/N).
\end{equation}
Splitting the sum over $n$ into congruences modulo $q_1q_1'q_2$, we get 
\begin{equation}
\begin{split}
    \mc T_{m, q_1,q_1',q_2}=&\sum_{\alpha\bmod q_1q_1'q_2} S(1,\alpha p\bar{q_1}^2;q_2)S(1,m\alpha p\bar{q_2}^2;q_1)S(1,\alpha p\bar{q_1'}^2;q_2)S(1,m\alpha p\bar{q_2}^2;q_1')\\
    &\times \sum_{n} W\left(\frac{\alpha+nq_1q_1'q_2}{N}\right).
\end{split}
\end{equation}
Using the Poisson summation formula for the sum over $n$ we get
\begin{equation}
\mc 
T_{m,q_1,q_1',q_2}=\frac{N}{q_1q_1'q_2}\sum_{n\in\mbb{Z}}\mc 
T_1(n;m,q_1,q_1',q_2)\int_{\mbb
 R}W(y)e\left(-\frac{nNy}{q_1q_1'q_2}\right)dy,
\end{equation}
where $\mc T(n;m,q_1,q_1',q_2)$ is given by
\begin{equation}
   \sum_{\alpha\bmod q_1q_1'q_2} S(1,\alpha p\bar{q_1}^2;q_2)S(1,m\alpha p\bar{q_2}^2;q_1)S(1,\alpha p\bar{q_1'}^2;q_2)S(1,m\alpha p\bar{q_2}^2;q_1') e_{q_1q_1'q_2}(\alpha n).
\end{equation}
Integrating by parts, we see that the integral is negligibly small for $|n|\gg (Q_1^2q_2)^{1+\epsilon}/N$. Thus
\begin{equation}\label{aftrPS}
    \mc T_{m, q_1,q_1',q_2}\ll \frac{q_2^{\epsilon}Y}{Q_1^{1-\epsilon}}\sum_{|n|\ll Q_1^{1+\epsilon}/Y}|\mc T(n;m,q_1,q_1',q_2)|+(Q_1q_2)^{-100}.
\end{equation}
Since $(q_2,q_1q_1')=1$, $\alpha$ can be uniquely written as
\begin{equation}
\alpha=\alpha_1q_2\bar{q_2}+\alpha_2q_1q_1'\bar{q_1}\bar{q_1'},
\end{equation}
where $\alpha_1$ varies modulo $q_1q_1'$ and $\alpha_2$ over $q_2$. Thus the sum $\mc T(n;m,q_1,q_2,q_2')$ splits into the product of
\begin{equation}
\mfk T^2(n;q_1,q_1',q_2)=\sum_{\alpha_2\md q_2}e_{q_2}(\alpha_2n \bar{q_1}\bar{q_1}')S(1,\alpha_2 p\bar{q_1}^2;q_2)S(1,\alpha_2 p\bar{q_1'}^2;q_2)
\end{equation}
and
\begin{equation}
\mfk T^1(n;q_1,q_1',q_2)=\sum_{\alpha_1\md q_1q_1'}e_{q_1q_1'}(\alpha_1n \bar{q_2})S(1,m\alpha_1 p\bar{q_2}^2;q_1)S(1,m\alpha_1 p\bar{q_2}^2;q_1').
\end{equation}

\subsubsection*{Bound for $\mfk T^1(n;q_1,q_1',q_2)$:} When $q_1=q_1'$, we observe that by trivially bounding the terms we get
\begin{equation}
    \mfk T^1(n;q_1,q_1',q_2)\ll Q_1^3.
\end{equation}
For the case when $q_1\neq q_1'$, we again split the sum over $\alpha_1$ and get
\begin{equation} \label{T^1}
    \begin{split}
         \mfk T^1(n;q_1,q_1',q_2)= \sum_{\alpha_3 \md q_1}e_{q_1}(\alpha_3n \bar{q_2})S(1,m\alpha_3 p\bar{q_2}^2;q_1) \times \sum_{\alpha_4 \md q_1'}e_{q_1'}(\alpha_4n \bar{q_2})S(1,m\alpha_4 p\bar{q_2}^2;q_1').
    \end{split}
\end{equation}
Let us consider the first sum on the right-hand side. We open the Kloosterman sum and get
\begin{equation}
    \sideset{}{^*}\sum_{a\md q_1}e_{q_1}(a)\sum_{\alpha_3\md q_1}e_{q_1}(\alpha_3(n\bar{q_2}+\bar{a}mp\bar{q_2}^2)).
\end{equation}
We note that the inner sum vanishes if $q_1|n$ and when $(q_1,n)=1$, the whole sum is $\ll q_1$.  The same analysis also holds for the second term on the right-hand side of \eqref{T^1}. Thus we conclude that 
\begin{equation}
  \mfk T^1(n;q_1,q_1',q_2)\ll  \begin{cases}
    Q_1^3 & \text{ if } q_1=q_1';\\
    Q_1^2 & \text{ if } n\neq 0 \text{ and } q_1\neq q_1';\\
    0 & \text{ if } n= 0 \text{ and } q_1\neq q_1'.
    \end{cases}
\end{equation}
\subsubsection*{Bound for $\mfk T^2(n;q_1,q_1',q_2)$:} We open the Kloosterman sums in $\mfk T^1(n;q_1,q_1',q_2)$ and get
\begin{equation}\label{T2cong}
\begin{split}
    \sideset{}{^*}\sum_{a,b\md q_2}e_{q_2}(a+b)\sum_{\alpha_2\md q_2}e_{q_2}(\alpha_2(nq_1q_1'+\bar{a}p\bar{q_1}^2+\bar{b}p\bar{q_1'}^2))= q_2\underset{nq_1q_1'+\bar{a}p\bar{q_1}^2+\bar{b}p\bar{q_1'}^2\equiv 0\md q_2}{\sideset{}{^*}\sum_{a,b\md q_2}e_{q_2}(a+b)}.
\end{split}
\end{equation}
We observe that $\bar a$ can be uniquely solved as $\bar{a}\equiv q_1^2 \bar{p}\bar{b}\bar{q_1'}^2(p-nbq_1q_1'^3)\md q_2 $. Let us write $j=p-nbq_1q_1'^3$ and  we see that $(j,q_2)=1$.

First, let us consider the case when $(q_2,n)=1$. In this case, we have $(j(p-g),q_2)=1$. From which we get
\begin{equation}
    \mfk T^2(n;q_1,q_1',q_2)=q_2 \underset{(p-j,q_2)=1}{\sideset{}{^*}\sum_{j\md q_2}}e_{q_2}(\bar{n}\bar{q_1}\bar{q_1'}^3(p-j)+pq_1'^2 \bar{n}\bar{q_1'}(p-j)\bar{j}).
\end{equation}
By adding and subtracting the missing $j=p$ term, we get
\begin{equation}
    \mfk T^2(n;q_1,q_1',q_2)=q_2 \sideset{}{^*}\sum_{j\md q_2}e_{q_2}(\bar{n}\bar{q_1}\bar{q_1'}^3(p-j)+pq_1'^2 \bar{n}\bar{q_1'}(p-j)\bar{j})-1.
\end{equation}
The sum over $j$ is now $\ll |S(-\bar{n}\bar{q_1}\bar{q_1'}^3,p^2q_1'^2 \bar{n}\bar{q_1'}; q_2)|\ll q_2^{1/2} $. Thus when $(q_2,n)=1$ we get 
\begin{equation}
    \mfk T^2(n;q_1,q_1',q_2)\ll q_2^{3/2}.
\end{equation}
When $q_2|n$, the congruence in \eqref{T2cong} reduces to $\bar{a}p\bar{q_1}^2+\bar{b}p\bar{q_1'}^2\equiv 0\md q_2$. Thus
\begin{equation}
 \mfk T^2(n;q_1,q_1',q_2)  = q_2\sideset{}{^*}\sum_{a\md q_2}e_{q_2}(a-q_1^2\bar{q_1'}^2a).
\end{equation}
The inside sum is clearly $\phi(q_2)$ if $q_2|(q_1^2-q_1'^2)$ and $-1$, otherwise.  Now, by requiring that $Q_1^{1+\delta}\ll q_2$, it becomes evident that $q_2|q_1^2-q_1'^2$ if and only if $q_2|q_1-q_1'$. Hence, we obtain
\begin{equation}
  \mfk T^2(n;q_1,q_1',q_2)\ll  \begin{cases}
    q_2 (q_2, q_1-q_1') & \text{ if } q_2|n;\\
    q_2^{3/2} & \text{ otherwise}.
    \end{cases}
\end{equation}
Therefore, we can deduce that
\begin{equation}
    \mc T(n;m,q_1,q_1',q_2)\ll \begin{cases}
        0 & \text{ if } n=0 \text{ and } q_1\neq q_1';\\
        Q_1^3 q_2^2 & \text{ if } n=0 \text{ and } q_1= q_1';\\
        Q_1^2 q_2^{3/2} (q_2,n)^{-1} & \text{ if } n\neq 0 \text{ and } q_1\neq q_1';\\
        Q_1^3 q_2^{3/2} (q_2,n)^{1/2} & \text{ if } n\neq 0 \text{ and } q_1= q_1'.
    \end{cases}
\end{equation}
Substituting the above bound for $\mc T(n;m,q_1,q_1',q_2)$ back into \eqref{aftrPS} gives us the following bounds:
\begin{equation}
    \mc T_{m, q_1,q_1',q_2}\ll \begin{cases}
        Q_1^{2+\epsilon}q_2^{2+\epsilon}Y+Q_1^{3+\epsilon} q_2^{3/2+\epsilon} & \text{ if } q_1=q_1';\\
        Q_1^{2+\epsilon} q_2^{3/2+\epsilon} & \text{ otherwise }.
    \end{cases}
\end{equation}
As a consequence, we get from \eqref{TUV} that
\begin{equation}
    \mc T(u,v) \ll (Q_1^{6+\epsilon} q_2^{2+\epsilon} X Y + Q_1^{7+\epsilon} q_2^{3/2+\epsilon} X).
\end{equation}
Finally, putting everything together in \eqref{Sdd}, we get the following estimate that
\begin{equation} \label{finalSdd}
    \mbf S_{dd}\ll Q_1^{3+\epsilon} q_2^{1+\epsilon} X^{1/2} Y^{1/2} + Q_1^{7/2+\epsilon} q_2^{3/4+\epsilon} X^{1/2}.
\end{equation}
\subsection{Estimating \texorpdfstring{$\mbf S_{nd}$}{snd}:}\label{sec:snd}

Consider the \textit{non-dual} part from  the approximate functional equations of $L(1/2, f\times \chi)$ and the \textit{dual} part from $L(1/2, g\times \chi\psi)$ and we denote its contribution to $\mbf S$ by  $\mbf S_{nd}$. Then we see that
\begin{equation}\label{Sndmain}
\begin{split}
    \mbf S_{nd}&= \sum_{q_1\in \mc D}\sideset{}{^*}\sum_{\chi(q_1)}\sideset{}{^*}\sum_{\psi(q_2)} \sum_{m,n}\frac{q_1G(\psi)^2}{q_2} \overline{\chi}(\bar{m}np\bar{q_2}^2)\overline{\psi}(np\bar{q_1}^2)\frac{\lambda_f(m)\lambda_g(n)}{(mn)^{1/2}}V_f\left(\tfrac{mX}{q_\chi^{1/2}}\right)V_g\left(\tfrac{n}{Yq_{\chi\psi}^{1/2}}\right)\\
    &=\sum_{q_1\in \mc D}\underset{np\equiv mq_2^2\bmod q_1}{\sum_{m,n}}\frac{q_1\phi(q_1q_2)}{q_2}\frac{\lambda_f(m)\lambda_g(n)}{(mn)^{1/2}} S(1,np\bar{q_1}^2;q_2) \\
    & \qquad \qquad \qquad \times V_f\left(\tfrac{mX}{q_\chi^{1/2}}\right)V_g\left(\tfrac{n}{Yq_{\chi\psi}^{1/2}}\right)+O\Big(\frac{Q_1^{3+\epsilon} q_2^{1+\epsilon}Y^{\frac{1}{2}}}{X^{\frac{1}{2}}}\Big)  .
\end{split}
\end{equation}
To get the error term in \eqref{Sndmain} we first truncate the sums over $m$ and $n$  at $\ll_{k_1, N_1} q_1^{1+\epsilon}/X$ and $\ll_{k_2, N_2}  (q_1q_2)^{1+\epsilon}Y$ (respectively, with negligible error). Next, we evaluate the character sums using \eqref{charsum}, \eqref{Gaussavg} and applying Weil bound on the Kloosterman sums.

The summation over $q_1,q_2, m,n$ in \eqref{Sndmain} can be rewritten as
\begin{equation}
   \sum_{m\ll Q_1^{1+\epsilon}/X}\sum_{n\ll (Q_1q_2)^{1+\epsilon}Y}\frac{\lambda_f(m)\lambda_g(n)}{(mn)^{1/2}} \sum_{q_1|(np- mq_2^2)}\frac{q_1\phi(q_1q_2)}{q_2} S(1,np\bar{q_1}^2;q_2)V_f\left(\tfrac{mX}{q_\chi^{1/2}}\right)V_g\left(\tfrac{n}{Yq_{\chi\psi}^{1/2}}\right).
\end{equation}
First, we observe that there is no contribution from the \textit{zero-frequency} term $np=mq_2^2$. To see this, since $n\ll (Q_1q_2)^{1+\epsilon}Y$ and $p\ll \log Q_1$, $np\ll (Q_1q_2)^{1+\epsilon}Y$. Thus by requiring that $Q_1\ll q_2$, we see that $np\ll q_2^{2+\epsilon}Y< mq_2^2$, by choosing $Y<q_2^{-\delta}$ for some small $\delta>0$.

Now, we note that the number of $q_1|(np-mq_2^2$) is $\ll q_2^\epsilon$. Thus we see that
\begin{equation}
    \sum_{q_1|(np- mq_2^2)}\frac{q_1\phi(q_1q_2)}{q_2} \left|S(1,np\bar{q_1}^2;q_2) V_f\Big(\tfrac{mX}{q_\chi^{1/2}}\Big) V_g\Big(\tfrac{n}{Yq_{\chi\psi}^{1/2}}\Big)\right|\ll Q_1^2q_2^{1/2+\epsilon}.
\end{equation}
Thus we get
\begin{equation}
    \mbf S_{nd}\ll Q_1^2q_2^{1/2+\epsilon}\sum_{m\ll Q_1^{1+\epsilon}/X}\sum_{n\ll (Q_1q_2)^{1+\epsilon}Y}\frac{|\lambda_f(m)\lambda_g(n)|}{(mn)^{1/2}}.
\end{equation}
Using Cauchy-Schwarz to evaluate the $m$ and $n$ sums, we get
\begin{equation} \label{finalSnd}
     \mbf S_{nd}\ll Q_1^2q_2^{1/2+\epsilon} \cdot Q_1^{1/2+\epsilon}X^{-1/2}\cdot (Q_1q_2)^{1/2+\epsilon} Y^{1/2}\ll Q_1^{3+\epsilon} q_2^{1+\epsilon} X^{-1/2} Y^{1/2}.
\end{equation}

\subsection{Estimating \texorpdfstring{$\mbf S_{nn}$}{snn}:}
Consider the \textit{non-dual} parts from  the approximate functional equations of $L(1/2, f\times \chi)$ and $L(1/2, g\times \chi\psi)$ and we denote its contribution to $\mbf S$ by $\mbf S_{nn}$. Then we see that
\begin{equation}\label{Snnmaineq}
\begin{split}
    \mbf S_{nn}&= \sum_{q_1\in \mc D}\sideset{}{^*}\sum_{\chi(q_1)}\sideset{}{^*}\sum_{\psi(q_2)} \sum_{m,n}\overline{G(\chi)}^2 \overline{\chi}(\bar{m}np)\overline{\psi}(np)\frac{\lambda_f(m)\lambda_g(n)}{(mn)^{1/2}}V_f\Big(\tfrac{mX}{q_\chi^{1/2}}\Big)V_g\Big(\tfrac{nY}{q_{\chi\psi}^{1/2}}\Big) \\
    &=\sum_{q_1\in \mc D}\underset{n\equiv p\bmod q_2}{\sum_{m,n}}\phi(q_1q_2)\frac{\lambda_f(m)\lambda_g(n)}{(mn)^{1/2}} S(1,m\bar{n}\bar{p};q_1)V_f\Big(\tfrac{mX}{q_\chi^{1/2}}\Big)V_g\Big(\tfrac{nY}{q_{\chi\psi}^{1/2}}\Big) \\
    & -\sum_{q_1\in \mc D}\underset{n\equiv p\bmod q_2}{\sum_{m,n}}\phi(q_2)\frac{\lambda_f(m)\lambda_g(n)}{(mn)^{1/2}} V_f\Big(\tfrac{mX}{q_\chi^{1/2}}\Big)V_g\Big(\tfrac{nY}{q_{\chi\psi}^{1/2}}\Big) \\
    &-\sum_{q_1\in \mc D} \sum_{m,n} \phi(q_1)\frac{\lambda_f(m)\lambda_g(n)}{(mn)^{1/2}} S(1,m\bar{n}\bar{p};q_1)V_f\Big(\tfrac{mX}{q_\chi^{1/2}}\Big)V_g\Big(\tfrac{nY}{q_{\chi\psi}^{1/2}}\Big) + O\Big(\frac{Q_1^{2+\epsilon} q_2^{1/2+\epsilon}}{(XY)^{1/2}}\Big).
\end{split}  
\end{equation}
The error term comes from truncating the sum over $m$ and $n$ at $\ll_{k_1, N_1} q_1^{1+\epsilon}/X$ and $\ll_{k_2, N_2}  (q_1q_2)^{1+\epsilon}/Y$ respectively with negligible error. After this truncation, let the three main terms in \eqref{Snnmaineq} be $T_1, T_2$ and $T_3$ respectively.
Thus, using Weil's bound for Kloosterman sums and \eqref{Vbound} we get
\begin{align} 
 T_1 &\ll \sum_{q_1\in \mc D}q_1q_2 \underset{n= p}{\sum_{m \leq q_1^{1+\epsilon}/X}}\frac{|\lambda_f(m)\lambda_g(n)|}{(mn)^{1/2}} q_1^{\frac{1}{2}} +\sum_{q_1\in \mc D} q_1q_2 \underset{n\equiv p\bmod q_2}{\underset{q_2<n\leq (q_1q_2)^{1+\epsilon}/Y}{\sum_{m \leq q_1^{1+\epsilon}/ X }}}\frac{|\lambda_f(m)\lambda_g(n)|}{(mn)^{1/2}} q_{1}^{\frac{1}{2}}\nonumber \\
    &\ll Q_1^{3+\epsilon}q_2 X^{-1/2} + \sum_{q_1\in \mc D} q_1^{3/2} q_2 \sum_{m \ll q_1^{1+\epsilon}/X} \frac{|\lambda_{f}(m)|}{m^{1/2}}  \sum_{1 \leq r \leq q_1^{1+\epsilon}q_2^{\epsilon}/Y}  \frac{|\lambda_g(p+rq_2)|}{(p+rq_2)^{1/2}} \nonumber  \nonumber \\
    &\ll Q_1^{3+\epsilon}q_2 X^{-1/2}+Q_1^{7/2+\epsilon} q_2^{1/2+\epsilon} X^{-1/2} Y^{-1/2}.
\end{align}
For $T_2$ we calculate
\begin{align}
    T_2 &\ll  \sum_{q_1\in \mc D} \phi(q_2) \underset{n= p}{\sum_{m \leq q_1^{1+\epsilon}/X}}\frac{|\lambda_f(m)\lambda_g(n)|}{(mn)^{1/2}} +\sum_{q_1\in \mc D} \phi(q_2) \underset{n\equiv p\bmod q_2}{\underset{q_2<n\leq (q_1q_2)^{1+\epsilon}/Y}{\sum_{m \leq q_1^{1+\epsilon}/ X }}}\frac{|\lambda_f(m)\lambda_g(n)|}{(mn)^{1/2}} \\
    & \ll Q_1^{3/2 +\epsilon} q_2 X^{-1/2} + Q_1^{2+\epsilon} q_2^{1/2+\epsilon} X^{-1/2} Y^{-1/2}.
\end{align}
Finally, for $T_3$ we estimate
\begin{equation}
T_3 \ll \sum_{q_1\in \mc D}q_1 \underset{n \leq (q_1q_2)^{1+\epsilon}/Y}{\sum_{m \leq q_1^{1+\epsilon}/X}}\frac{|\lambda_f(m)\lambda_g(n)|}{(mn)^{1/2}} q_1^{\frac{1}{2}} \ll  Q_1^{7/2+\epsilon} q_2^{1/2+\epsilon} X^{-1/2} Y^{-1/2}.
\end{equation}
Thus combining everything we get 
\begin{equation} \label{finalSnn}
  \mbf  S_{nn} \ll  Q_1^{3+\epsilon}q_2 X^{-1/2}+Q_1^{7/2+\epsilon} q_2^{1/2+\epsilon} X^{-1/2} Y^{-1/2}.
\end{equation}
\noindent \textbf{Proof of \thmref{thasymp}}: For the simplicity of collecting the error terms, we put $q_2 \asymp Q_1^{c}$, where $c>0$ is a parameter depending on $Q_1, q_2$. From the considerations in sections \ref{sec:sdd} and \ref{sec:snd}, we see that $c\ge 1$. Therefore, we note that $\pi_2(Q_1)q_2\asymp Q_1^{3+c}/3(\log 2Q_1)$. Hence, from the estimate in \eqref{finalSnn}, we observe that $X=Q_1^{\alpha}$ for some $\alpha>0$ would suffice. We also note from the estimates in  \eqref{finalSdn}, \eqref{finalSdd}, \eqref{finalSnd}, \eqref{finalSnn} that it suffices to take $Y=q_2^{-\beta}$ for some $\beta> 0$. Thus, combining everything we get
\begin{align} \label{semifinalS}
\mbf S&= \frac{\lambda_g(p)}{p^{1/2}} \pi_2(Q_1) q_2 + 
   O(Q_1^{\frac{5}{2} + \alpha +\epsilon}q_{2} + Q_1^{3+ \frac{\alpha}{2} +\epsilon}q_2^{\frac{1+\beta}{2}+\epsilon} + Q_1^{\frac{7+ \alpha}{2} +\epsilon}q_2^{\frac{1+2\beta}{2}+\epsilon}) \\
   &+ O( Q_1^{\frac{11-\alpha}{4}} q_2 +Q_1^{4+ \alpha +\epsilon} + Q_1^{\frac{11}{4}} q_2^{\frac{3-\beta}{4}} + Q_1^{3+\frac{\alpha}{2}+\epsilon}q_2^{1-\frac{\beta}{2}+\epsilon} +Q_1^{\frac{7+ \alpha}{2} +\epsilon}q_2^{\frac{3}{4}+\epsilon}) \\
   &+O(Q_1^{3-\frac{\alpha}{2}+\epsilon} q_2^{1-\frac{\beta}{2}+\epsilon} + Q_1^{\frac{7-\alpha}{2}+\epsilon} q_2^{\frac{1+\beta}{2}+\epsilon}  + Q_1^{3-\frac{\alpha}{2}+\epsilon}q_2 ).
\end{align}
We immediately note that $Q_1^{\frac{7+ \alpha}{2} +\epsilon}q_2^{\frac{1+2\beta}{2}+\epsilon} \gg Q_1^{3+ \frac{\alpha}{2} +\epsilon}q_2^{\frac{1+\beta}{2}+\epsilon} $ and $Q_1^{3+\frac{\alpha}{2}+\epsilon}q_2^{1-\frac{\beta}{2}+\epsilon} \gg Q_1^{3-\frac{\alpha}{2}+\epsilon} q_2^{1-\frac{\beta}{2}+\epsilon} $. Let $c_0$ denote the greatest lower bound for all such  $c$. Now, since our effort is to make $c_0$ as small as possible, the error term $Q_1^{\frac{7+ \alpha}{2} +\epsilon}q_2^{\frac{3}{4}+\epsilon}$ indicates that smaller the $\alpha$ is chosen, smaller will be the $c_0$. To keep the optimization process simple, we choose $\alpha=1/100$. Now comparing the main term with the error term $Q_1^{\frac{7+ \alpha}{2} +\epsilon}q_2^{\frac{3}{4}+\epsilon}$, we see that $c>101/50+\epsilon$. We fix $c \geq 101/50+1/100= 203/100$.

 As $\alpha=1/100$, we note that $Q_1^{3-\frac{\alpha}{2}+\epsilon}q_2 \gg  Q_1^{\frac{11-\alpha}{4}} q_2 \gg Q_1^{\frac{5}{2} + \alpha +\epsilon}q_{2} $. Further as $c_{0}= 203/100$, we observe that $Q_1^{3-\frac{\alpha}{2}+\epsilon}q_2 \gg Q_1^{4+ \alpha +\epsilon} $ and $Q_1^{3-\frac{\alpha}{2}+\epsilon}q_2 \gg Q_1^{\frac{11}{4}} q_2^{\frac{3-\beta}{4}} $.
 
 Now to choose $\beta$, taking $p \ll \log Q_1$, we compare the main term in \eqref{semifinalS} with the error term $Q_1^{3+\frac{\alpha}{2}+\epsilon}q_2^{1-\frac{\beta}{2}+\epsilon}$ and get
 \begin{equation} \label{ffirstfinalS}
    \frac{Q_1^{3+c }}{(\log 2Q_1)^{3/2}} \gg  Q_1^{3+\frac{\alpha}{2}+c-\frac{\beta c}{2}+ \epsilon}.
\end{equation}
This implies $\beta c > 1/100 +\epsilon$. As $c_0= 203/100$, we need $\beta >1/203+ \epsilon$. We choose $\beta=1/100$. Thus we calculate that $Q_1^{3-\frac{\alpha}{2}+\epsilon}q_2 \gg Q_1^{\frac{7+ \alpha}{2} +\epsilon}q_2^{\frac{1+2\beta}{2}+\epsilon}$, $Q_1^{3-\frac{\alpha}{2}+\epsilon}q_2 \gg Q_1^{\frac{7-\alpha}{2}+\epsilon} q_2^{\frac{1+\beta}{2}+\epsilon} $ and $Q_1^{3-\frac{\alpha}{2}+\epsilon}q_2 \gg Q_1^{3+\frac{\alpha}{2}+\epsilon}q_2^{1-\frac{\beta}{2}+\epsilon} $. Hence it remains to compare only two error terms $Q_1^{3-\frac{\alpha}{2}+\epsilon}q_2$ and $Q_1^{\frac{7+ \alpha}{2} +\epsilon}q_2^{\frac{3}{4}+\epsilon}.$

We calculate that for $203/100 \leq c \leq 51/25 $, we have 
\begin{equation*}
    Q_1^{\frac{7+ \alpha}{2} +\epsilon}q_2^{\frac{3}{4}+\epsilon} \gg Q_1^{3-\frac{\alpha}{2}+\epsilon}q_2
\end{equation*}
and for $c > 51/25$, we have
\begin{equation*}
    Q_1^{\frac{7+ \alpha}{2} +\epsilon}q_2^{\frac{3}{4}+\epsilon} \ll Q_1^{3-\frac{\alpha}{2}+\epsilon}q_2.
\end{equation*}
This completes the proof.\qed

\section{Proofs of Theorems \ref{unipair} and \ref{thsimul}}

\subsection{ Proof of \thmref{unipair}}
It is enough to consider the pairs $(Q_1, q_2)$ such that $q_2 = Q_1^{5/2}+O(Q_1^2)$ (see Remark \ref{remarksizeq2}). Thus after putting $q_2 = Q_1^{5/2}+O(Q_1^2)$ in  \thmref{thasymp} we get that
\begin{equation} \label{newthasymp}
    \sum_{q_1\in \mc D}\sideset{}{^*}\sum_{\chi(q_1)}\sideset{}{^*}\sum_{\psi(q_2)}\overline{\chi}(p)\overline{\psi}(p)\overline{G(\chi)}^2L(1/2, f\times \chi)L(1/2, g\times \chi\psi)= \frac{\lambda_g(p)}{p^{1/2}}\cdot \pi_2 (Q_1) Q_1^{\frac{5}{2}}  +O(Q_1^{\frac{1099}{200} +\epsilon}).
\end{equation}
Now, for $i=1,2$ if we have $f_i\in S_{k_i}(N_i)$ and $g_i\in S_{l_i}(M_i)$ such that
\begin{equation} \label{fg}
    L(1/2, f_1\times \chi)L(1/2, g_1\times \chi\psi)=L(1/2, f_2\times \chi)L(1/2, g_2\times \chi\psi),
\end{equation} 
we invoke \eqref{newthasymp} and get
\begin{equation} \label{equalg}
     \frac{\lambda_{g_1}(p)}{p^{1/2}}\cdot \pi_2 (Q_1) Q_1^{\frac{5}{2}}  +O(Q_1^{1099/200+\epsilon})=  \frac{\lambda_{g_2}(p)}{p^{1/2}}\cdot \pi_2 (Q_1) Q_1^{\frac{5}{2}} +O(Q_1^{1099/200+\epsilon}).
\end{equation}
Let us recall that $\pi_2(Q_1)Q_1^{\frac{5}{2}} \asymp Q^{\frac{11}{2}}/\log Q_1$. As \eqref{equalg} holds for every large enough $Q_1$ and all $p$ such that $(p, M_1M_2)=1$ and $p \leq \log Q_1$, it implies that
$\lambda_{g_1}(p)= \lambda_{g_2}(p)$ for all $p$ co-prime with $M_1M_2$. Thus from the strong multiplicity-one property for newforms we have $g_1= g_2$, $l_1=l_2$ and $M_1=M_2$.

Now to prove $f_1=f_2$ from here, we need that for any fixed primitive $\chi$ there exists at least one primitive $\psi$ such that $L(1/2, g \times \chi \psi)$ is non-zero. We will do this by using the \lemref{lemma} below, which is well known to the mathematicians working in this field. However, an asymptotic with a known dependency on the level of the modular form is not readily available in the literature. Thus, for the sake of completeness and convenience of the reader, we write a proof here.
\begin{lem} \label{lemma}
    Let $h$ be a newform of fixed level $q_h$ and $p$ be a prime with $(p, q_h)=1$. Further, let $D$ be a set of primes defined as 
    \[D:= \{q \text{ is prime} | Q\leq q \leq 2Q \} .\]
    Then for any fixed prime $p < Q$ with $(p,q_h)=1$, we have
    \begin{equation} \label{nonvang}
   W:= \sum_{ q \in D}\sideset{}{^*}\sum_{\psi (q)} \overline{\psi} (p) L(1/2, h\times \psi)= \frac{\lambda_h(p) }{p^{1/2}}\cdot \pi_1(Q) +O(Q^{ 7/4 +\epsilon} q_h^{1/4+\epsilon}).        
    \end{equation}
\end{lem}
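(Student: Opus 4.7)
The plan is to substitute the approximate functional equation \eqref{AFE} for $L(1/2,h\times\psi)$, carry out the $\psi$-sum via the character orthogonality \eqref{charsum} in the non-dual part and via the Gauss-sum identity \eqref{Gaussavg} in the dual part, and finally optimize the free parameter $X>0$. By \eqref{Vbound} the non-dual and dual $n$-sums are effectively truncated at $n\ll N:=Q\sqrt{q_h}Q^{\epsilon}/X$ and $n\ll N':=XQ\sqrt{q_h}Q^{\epsilon}$, respectively.

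For the non-dual contribution $W_{\mathrm{nd}}$, \eqref{charsum} (valid since $p<Q\le q$ forces $q\ne p$) splits it into a diagonal $n\equiv p\bmod q$ sum minus $\sum_{q\in D}\sum_{n}(\lambda_h(n)/\sqrt n)V_h(nX/(q\sqrt{q_h}))$. The $n=p$ term in the diagonal produces the expected main term
\begin{equation}
\frac{\lambda_h(p)}{\sqrt p}\sum_{q\in D}\phi(q)=\frac{\lambda_h(p)}{\sqrt p}\pi_1(Q)+O(Q^{1+\epsilon});
\end{equation}
to see that replacing $V_h(pX/(q\sqrt{q_h}))$ by $1$ is harmless, I shift the Mellin contour in the definition of $V_h$ to $\Re s=-1$ (no pole of $\gamma_h(1/2+s)$ is crossed since $k\ge 2$), which yields $V_h(y)=1+O(y)$, and with the eventual choice $X=Q^{-1/2}$ together with $p<Q$ the argument is $\ll Q^{-1/2}/\sqrt{q_h}$, contributing at most $O(Q^{3/2}/\sqrt{q_h})$. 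For the off-diagonal terms $n=p+kq$ with $k\ge 1$, the crucial observation is that for each $n\le N$ at most $O(Q^{\epsilon})$ primes $q\in[Q,2Q]$ divide $n-p$. Swapping the $(q,n)$-summation and using Deligne's bound together with the Rankin--Selberg estimate $\sum_{n\le N}|\lambda_h(n)|/\sqrt n\ll N^{1/2+\epsilon}$ yields
\begin{equation}
\sum_{q\in D}\phi(q)\sum_{\substack{n\le N\\ n\equiv p\bmod q\\ n\ne p}}\frac{|\lambda_h(n)|}{\sqrt n}\ll Q^{1+\epsilon}\sum_{n\le N}\frac{|\lambda_h(n)|}{\sqrt n}\ll \frac{Q^{3/2+\epsilon}q_h^{1/4+\epsilon}}{X^{1/2}},
\end{equation}
and the subtracted second piece obeys the same bound.

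For the dual contribution $W_{\mathrm{d}}$, \eqref{Gaussavg} combined with $\overline{\lambda_h(n)}=\lambda_h(n)$ leaves the main piece $\sum_{q\in D}(\phi(q)/q)\sum_{n\le N'}(\lambda_h(n)/\sqrt n)S(1,np;q)V_h(n/(Xq\sqrt{q_h}))$ plus a smaller residual from the $-1$ in \eqref{Gaussavg}. Since $(np,q)=1$ (using $(p,q_h)=1$ and $p<Q\le q$), Weil's bound gives $|S(1,np;q)|\ll Q^{1/2}$, and a further Cauchy--Schwarz on the $n$-sum yields
\begin{equation}
W_{\mathrm{d}}\ll \pi(Q)\cdot Q^{1/2}\cdot (N')^{1/2+\epsilon}\ll Q^{2+\epsilon}X^{1/2}q_h^{1/4+\epsilon}.
\end{equation}
Balancing the two competing error terms $Q^{3/2+\epsilon}q_h^{1/4+\epsilon}/X^{1/2}$ and $Q^{2+\epsilon}X^{1/2}q_h^{1/4+\epsilon}$ forces the choice $X=Q^{-1/2}$, at which both become $O(Q^{7/4+\epsilon}q_h^{1/4+\epsilon})$, proving \eqref{nonvang}. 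The main technical step is the off-diagonal count in the non-dual part: without noting that each $n\le N$ admits at most $O(Q^{\epsilon})$ representations $n=p+kq$ with $q\in[Q,2Q]$ prime, the naive bound would lose a factor of $Q$ and fall short of the target.
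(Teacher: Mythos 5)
Your proposal is correct and follows essentially the same route as the paper: expand via the approximate functional equation, evaluate the $\psi$-sum by the orthogonality relation \eqref{charsum} in the non-dual part and the Gauss-sum identity \eqref{Gaussavg} in the dual part, bound the resulting Kloosterman sum by Weil, and balance at $X=Q^{-1/2}$ to land on $Q^{7/4+\epsilon}q_h^{1/4+\epsilon}$. The only (cosmetic) deviation is your contour shift to $\Re s=-1$ to get $V_h(y)=1+O(y)$; for weight $k=2$ the factor $\gamma_h(1/2+s)\propto\Gamma(s+k/2)$ has a pole exactly at $s=-1$, so one should shift to $\Re s=-1+\delta$, or simply use the paper's weaker $V_h(y)=1+O(y^{1/4})$ from \eqref{Vasymp}, which still gives an error dominated by the stated bound.
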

\begin{proof}
    Using the approximate functional equation from \eqref{AFE} in \eqref{nonvang}, we write the sum for non-dual and dual part of $W$ as $W_{nd}$ and $W_d$ respectively. So we get
    \begin{equation} \label{Wnd}
    W_{nd}:= \sum_{ q \in D}\sideset{}{^*}\sum_{\psi(q)} \sum_{n\geq 1} \frac{\lambda_{h}(n) \psi(n \bar{p})}{n^{1/2}} V_h\Big(\tfrac{nX}{q_{h \times \psi}^{1/2}}\Big).
    \end{equation}
The rapid decay of $V_h(y)$ (see \eqref{Vbound}) allows us to truncate the sum over $n$ at $n\ll_{\epsilon} q_h^{1/2} q^{1+\epsilon}/X$ with negligible error. Thus we calculate 
\begin{equation} \label{Wndcal}
    \begin{split}
        W_{nd}&= \sum_{ q \in D} \sum_{n\geq 1} \frac{\lambda_{h}(p) }{n^{1/2}} \phi(q) \delta_{n \equiv p \bmod{q}} V_h\Big(\tfrac{nX}{q_{h \times \psi}^{1/2}}\Big) + O(Q^{3/2 +\epsilon} q_h^{1/4+\epsilon} X^{-1/2}) \\
        &= \sum_{ q \in D} \frac{\lambda_{h}(p)}{p^{1/2}} \phi(q) + \sum_{ q \in D}\phi(q) \sum_{ q < n \leq q_h^{1/2} q^{1+\epsilon}/X} \frac{\lambda_{h}(n)}{n^{1/2}}  \delta_{n \equiv p \bmod{q}} V_h\Big(\tfrac{nX}{q_{h \times \psi}^{1/2}}\Big) \\
        & +O(Q^{\frac{7}{4}}q_h^{\frac{1}{8}} X^{\frac{1}{4}}+Q^{\frac{3}{2} +\epsilon} q_h^{\frac{1}{4}+ \epsilon} X^{\frac{-1}{2}}) \\
        &= \frac{\lambda_{h}(p)}{p^{1/2}} \pi_1(Q) + O\Big(\sum_{ q \in D} \sum_{1\leq r \leq q_h^{1/2}q^{\epsilon}/X} \frac{\phi(q)}{(p+rq)^{\frac{1}{2}- \epsilon}}  \Big) + O(Q^{\frac{3}{2} +\epsilon} q_h^{\frac{1}{4}+\epsilon} X^{\frac{-1}{2}})\\
        & +O(Q^{\frac{7}{4}}q_h^{\frac{1}{8}} X^{\frac{1}{4}}+Q^{\frac{3}{2} +\epsilon} q_h^{\frac{1}{4}+ \epsilon} X^{\frac{-1}{2}}) \\
        &= \frac{\lambda_{h}(p)}{p^{1/2}} \pi_1(Q) +O(Q^{\frac{7}{4}}q_h^{\frac{1}{8}} X^{\frac{1}{4}}+Q^{\frac{3}{2} +\epsilon} q_h^{\frac{1}{4}+ \epsilon} X^{\frac{-1}{2}}).
    \end{split}
\end{equation}
For the dual part of $W$, we calculate
\begin{equation} \label{Wdcal}
    \begin{split}
        W_d &= \sum_{ q \in D}\sideset{}{^*}\sum_{\psi(q)} \frac{G( \psi)^2 }{ q} \sum_{n\geq 1} \frac{\overline{\lambda_{h}}(n) \overline{\psi}(n p)}{n^{1/2}} V_h\Big(\tfrac{n}{Xq_{h \times\psi}^{1/2}}\Big) \\
        &= \sum_{ q \in D} \frac{1 }{ q} \sum_{n\geq 1} \frac{\overline{\lambda_{h}}(n) }{n^{1/2}} \Big(\sideset{}{^*}\sum_{\psi(q)} G(\psi)^2 \overline{\psi} (np ) \Big) V_h\Big(\tfrac{n}{Xq_{h \times\psi}^{1/2}}\Big) \\
        &= \sum_{ q \in D} \frac{\phi(q)}{ q} \sum_{1 \leq n \leq q_h^{1/2}q^{1+\epsilon}X} \frac{\overline{\lambda_{h}}(n) }{n^{1/2}} S(1, np; q) V_h\Big(\tfrac{n}{q_{h \times \psi}^{1/2}}\Big) + O(Q^{\frac{1}{2}+\epsilon} q_h^{\frac{1}{4}+\epsilon} X^{\frac{1}{2}})\\
        &= O(Q^{2+\epsilon} q_h^{\frac{1}{4}+\epsilon} X^{\frac{1}{2}}).
    \end{split}
\end{equation} 
Now we put $X= Q^{-1/2}$ in \eqref{Wndcal} and \eqref{Wdcal} respectively and combine them to get \eqref{nonvang}.
\end{proof}
 By putting $h= g \times \chi$ and taking $p$ to be such that $\lambda_{g}(p) \neq 0$ we easily conclude from \lemref{lemma} that for any fixed primitive $\chi$ of conductor $q_1$, there exists some primitive $\psi$ with conductor $q_2 \gg q_1^{5/2}$ such that $L(1/2, g \times \chi \psi)$ is non-zero. 

Now from \eqref{fg} we infer that for every primitive $\chi$ of prime conductor, for all sufficiently large prime  we have
\begin{equation} \label{f=f}
   L(1/2, f_1\times \chi) =L(1/2, f_2\times \chi). 
\end{equation}
To prove $f_1=f_2$ from \eqref{f=f}, we put $\chi=\psi$ and again apply \lemref{lemma} to both of $f_1$ and $f_2$. Thus it gives that $\lambda_{f_1}(p)= \lambda_{f_2}(p)$ for almost all primes $p$ with $(p,N_1N_2)=1$. Now from the strong multiplicity-one theorem we conclude that $k_1=k_2$, $N_1=N_2$ and $f_1=f_2$. \qed
\begin{rmk} \label{remarksizeq2}
From Theorem.$1$ in \cite{baker2001difference} we know that there is an $x_0 >0$ such that for any $x >x_0$ there is a prime in the interval $[x-x^{0.525},x]$. Thus for any sufficiently large $Q_1$, one can find a prime $q_2$ satisfying $q_2=Q_1^{5/2}+O(Q_1^2)$ .
 \end{rmk}
\subsection{Simultaneous non-vanishing}We recall the set $\mc M$ from \eqref{nonvanset}. Then using the Cauchy-Schwarz inequality, we get
\begin{align}\label{CSsimul}
 &\left|\sum_{q_1\in \mc D}\sideset{}{^*}\sum_{\chi(q_1)}\sideset{}{^*}\sum_{\psi(q_2)}\overline{\chi}(p)\overline{\psi}(p)\overline{G(\chi)}^2L(1/2, f\times \chi)L(1/2, g\times \chi\psi)\right|^2 \nonumber\\
 &\le Q_1^2\cdot \#\mc M\cdot \sum_{q_1\in \mc D}\frac{q_1}{\phi(q_1)}\sideset{}{^*}\sum_{\chi(q_1)}\sideset{}{^*}\sum_{\psi(q_2)}\left|L(1/2, f\times \chi)L(1/2, g\times \chi\psi)\right|^2.
 \end{align}
We will estimate the right-hand side by using the approximate functional equation and the large sieve inequality. First, using the approximate functional equation (see \eqref{AFE}) we write
\begin{equation}
   L(1/2, f\times \chi)L(1/2, g\times \chi\psi)= \mc S_1+\mc S_2+\mc S_3+\mc S_4,
\end{equation}
where
\begin{align*}  
\mc S_{1}&=\sum_{m,n} \chi(mn)\psi(n)\frac{\lambda_f(m)\lambda_g(n)}{(mn)^{1/2}}V_f\Big(\tfrac{mX}{q_\chi^{1/2}}\Big)V_g\Big(\tfrac{nY}{q_{\chi\psi}^{1/2}}\Big),\\ 
    \mc S_{2}&= \frac{G(\chi\psi)^2}{q_1q_2}\sum_{m,n} \chi(m\bar{n})\overline{\psi}(n)\frac{\lambda_f(m)\lambda_g(n)}{(mn)^{1/2}}V_f\left(\tfrac{mX}{q_\chi^{1/2}}\right)V_g\left(\tfrac{n}{Yq_{\chi\psi}^{1/2}}\right),\\
    \mc S_{3}&=\frac{G(\chi)^2}{q_1}\sum_{m,n}  \chi(\bar{m}n) \psi(n)\frac{\lambda_f(m)\lambda_g(n)}{(mn)^{1/2}}V_f\Big(\tfrac{m}{Xq_\chi^{1/2}}\Big)V_g\Big(\tfrac{nY}{q_{\chi\psi}^{1/2}}\Big),\\
    \mc S_{4}&= \frac{G(\chi\psi)^2G(\chi)^2}{q_1^2q_2}\sum_{m,n} \overline{\chi}(mn)\overline{\psi}(n)\frac{\lambda_f(m)\lambda_g(n)}{(mn)^{1/2}}V_f\Big(\tfrac{m}{Xq_\chi^{1/2}}\Big)V_g\Big(\tfrac{n}{Yq_{\chi\psi}^{1/2}}\Big).
\end{align*}
Thus
\begin{equation}
    \begin{split}
    &\sum_{q_1\in \mc D}\frac{q_1}{\phi(q_1)}\sideset{}{^*}\sum_{\chi(q_1)}\sideset{}{^*}\sum_{\psi(q_2)}\left|L(1/2, f\times \chi)L(1/2, g\times \chi\psi)\right|^2 \\ &\ll \sum_{i=1}^{4}\sum_{q_1\in \mc D}\frac{q_1}{\phi(q_1)}\sideset{}{^*}\sum_{\chi(q_1)}\sideset{}{^*}\sum_{\psi(q_2)} {|\mc S_i|^2}\ll \sum_{i=1}^{4}\sum_{d_1\ll Q_1 }\frac{d_1}{\phi(d_1)}\sideset{}{^*}\sum_{\chi(d_1)}\sideset{}{^*}\sum_{\psi(q_2)} {|\mc S_i|^2}.
    \end{split}
\end{equation}
First we put $X=Y=1$ and note that the sums over $m,n$ in $\mc S_i$ can be truncated at $\ll_{k_1, N_1} M:= Q_1^{1+\epsilon}$ and $\ll_{k_2, N_2} N:= (Q_1q_2)^{1+\epsilon}$ respectively with negligible error in $Q_1$ and $q_2$. Using the large sieve inequality (see \cite[Theorem 7.13]{iwaniec2021analytic}) we get
\begin{equation}
    \begin{split}
        \sum_{d_1}\frac{d_1}{\phi(d_1)}\sideset{}{^*}\sum_{\chi(d_1)}\sideset{}{^*}\sum_{\psi(q_2)} {|\mc S_1|^2}&\ll (q_2+ N)\sum_{d_1\ll Q_1}\frac{d_1}{\phi(d_1)}\sideset{}{^*}\sum_{\chi(d_1)}\sum_{n\ll N}\frac{|\lambda_g(n)|^2}{n}\left|\sum_{m\ll M} \chi(m)\tfrac{\lambda_f(m)}{(m)^{1/2}}V_f\right|^2\\
        &\ll (q_2+N)(Q_1^2+M)\sum_{n\ll N}\sum_{m\ll M}\left|\tfrac{\lambda_f(m)\lambda_g(n)}{(mn)^{1/2}}\right|^2\\
        &\ll (q_2+N)(Q_1^2+M) \log N \log M\\
        &\ll Q_1^{2} q_2^{1+1/c+\epsilon}\log Q_1.
    \end{split}
\end{equation}
In the last inequality we have used that $q_2 \asymp Q_1^c$ and that $q_2+N\ll N \ll q_2^{1+1/c+\epsilon}$, $Q_1^2+M\ll Q_1^2$. We further note that the same bound holds for $\mc S_2$, $\mc S_3$, and $\mc S_4$. Thus we get
\begin{equation}
    \sum_{q_1\in \mc D}\frac{q_1}{\phi(q_1)}\sideset{}{^*}\sum_{\chi(q_1)}\sideset{}{^*}\sum_{\psi(q_2)}\left|L(1/2, f\times \chi)L(1/2, g\times \chi\psi)\right|^2  \ll Q_1^{2} q_2^{1+1/c+\epsilon}\log Q_1.
\end{equation}
Thus from \eqref{CSsimul}, using the asymptotic from Theorem \ref{thasymp} we conclude that
\begin{equation}
    \# \mc M\gg \frac{|\lambda_g(p)|^2}{p}\cdot \frac{Q_1^6(\log Q_1)^{-2}q_2^2}{Q_1^{4} \log Q_1q_2^{1+1/c+\epsilon}}\gg \frac{|\lambda_g(p)|^2}{p}Q_1^{2}(\log Q_1)^{-3}q_2^{1-1/c-\epsilon}.
\end{equation}
Now from the prime number theorem of $GL(2)$ $L$-functions (see \cite[Theorem 5.13]{iwaniec2021analytic}), we have 
\begin{equation}
    \sum_{p\le x} \lambda_g(p)^2 \log p = x+O( k_2N_2^{1/2}x\exp(-c\sqrt{\log x})).
\end{equation}
Thus we can choose a $p$ independent of $Q_1$ and $q_2$ such that $\lambda_g(p)^2\geq C$, for some absolute constant $C>0$. 
This completes the proof of \thmref{thsimul}.\qed
\printbibliography
\end{document}